\documentclass[11pt]{article}
\usepackage{times}
\usepackage{graphicx}
\usepackage{fullpage}
\usepackage{amsmath,amssymb,amsthm}
\usepackage{enumerate}
\usepackage[mathcal]{euscript}
\usepackage{comment}
\bibliographystyle{plainnat}
\usepackage[round]{natbib}

\usepackage[T1]{fontenc}
\usepackage{microtype}
\usepackage{enumitem}

    \numberwithin{equation}{section}

    \usepackage[usenames]{color}
    \definecolor{plum}  {rgb}{.4,0,.4}
    \definecolor{BrickRed} {rgb}{0.6,0,0}
    
    \usepackage{commath}
    \usepackage[normalem]{ulem}

    \usepackage[plainpages=false,pdfpagelabels,colorlinks=true,linkcolor=BrickRed,citecolor=plum]{hyperref}


    \def\ddefloop#1{\ifx\ddefloop#1\else\ddef{#1}\expandafter\ddefloop\fi}

    \def\ddef#1{\expandafter\def\csname c#1\endcsname{\ensuremath{\mathcal{#1}}}}
    \ddefloop ABCDEFGHIJKLMNOPQRSTUVWXYZ\ddefloop

    \def\ddef#1{\expandafter\def\csname s#1\endcsname{\ensuremath{\mathsf{#1}}}}
    \ddefloop ABCDEFGHIJKLMNOPQRSTUVWXYZ\ddefloop


    \def\E{\mathbf{E}}
    \def\PP{\mathbf{P}}

    \def\Reals{\mathbb{R}}
    \def\Naturals{\mathbb{N}}

    \def\argmin{\operatornamewithlimits{arg\,min}}

    \def\deq{:=}

    \def\wh#1{\widehat{#1}}
    \def\bd#1{\boldsymbol{#1}}

	\def\tr{{\mathrm{tr}}}

    \def\1{{\mathbf 1}}

	\def\trn{{\hbox{\it\tiny T}}} 
    \def\eps{\varepsilon}

    \newtheorem{theorem}{Theorem}[section]
    \newtheorem{lemma}{Lemma}[section]

    \newtheorem{assumption}{Assumption}[section]
    \newtheorem{remark}{Remark}[section]

    \usepackage{cleveref}

    \begin{document}

    \title{\bf Theoretical guarantees for sampling and inference in generative models with latent diffusions}

    \author{Belinda Tzen\thanks{University of Illinois; e-mail: btzen2@illinois.edu.} \and Maxim Raginsky\thanks{University of Illinois; e-mail: maxim@illinois.edu.}}

    \date{}

	    \maketitle
	
		\begin{abstract} We introduce and study a class of probabilistic generative models, where the latent object is a finite-dimensional diffusion process on a finite time interval and the observed variable is drawn conditionally on the terminal point of the diffusion. We make the following contributions:

	We provide a unified viewpoint on both sampling and variational inference in such generative models through the lens of stochastic control.

	We quantify the expressiveness of diffusion-based generative models. Specifically, we show that one can efficiently sample from a wide class of terminal target distributions by choosing the drift of the latent diffusion from the class of multilayer feedforward neural nets, with the accuracy of sampling measured by the Kullback--Leibler divergence to the target distribution.

	Finally, we present and analyze a scheme for unbiased simulation of generative models with latent diffusions and provide bounds on the variance of the resulting estimators. This scheme can be implemented as a deep generative model with a random number of layers.

		\end{abstract}

    \thispagestyle{empty}
	
		\section{Introduction and informal summary of results}

Recently there has been much interest in using continuous-time processes to analyze discrete-time algorithms and probabilistic models \citep{wibisono2016accelerated,li2017SME,mandt2017SGD,chen18neuralODE,yang2018physical}. In particular, diffusion processes have been examined as a way towards a better understanding of first- and second-order optimization methods, as they afford an analysis of behavior over non-convex landscapes using a rich array of techniques from the mathematical physics literature \citep{li2017SME,raginsky2017non,zhang2017hitting}. Gradient flows and diffusions have also found a role in the analysis of deep neural nets, where they are interpreted as describing the limiting case of infinitely many layers, with each layer being `infinitesimally thin' (e.g., \citet{chen18neuralODE,li2018maximumDP}). As in the case of optimization, continuous-time frameworks enable the use of a different set of tools for studying standard questions of relevance, such as sampling and inference, i.e., forward and backward passes through the network.	
	
In this work, we consider a class of generative models where the latent object  $X = \{X_t\}_{t \in [0,1]}$ is a $d$-dimensional diffusion and the observable object $Y$ is a random element of some space $\sY$:
\begin{subequations}\label{eq:generative_model}
\begin{align}
	\dif X_t &= b(X_t,t;\theta)\dif t + \dif W_t, \qquad X_0 = x \label{eq:latent_diffusion}\\
	Y &\sim q(\cdot|X_1) \label{eq:observation_model}
\end{align}
\end{subequations}
where \eqref{eq:latent_diffusion} is a $d$-dimensional It\^o diffusion process whose drift $b(\cdot,\cdot;\theta)$ is a member of some parametric function class, such as multilayer feedforward neural nets, and \eqref{eq:observation_model} prescribes an observation model for generating $Y$ conditionally on $X_1$. To the best of our knowledge, generative models of this form were first considered by \cite{movellan2002diffusions} as a noisy continuous-time counterpart of recurrent neural nets. More recently, \cite{hashimoto16} and \cite{ryder2018SDE_VI} investigated the use of discrete-time recurrent neural nets to approximate the population dynamics of biological systems that are classically modeled by diffusions. It is natural to view \eqref{eq:generative_model} as a continuum limit of deep generative models introduced by \citet{rezende2014stochbackprop} --- in fact, as we explain in Section~\ref{sec:simulation}, one can simulate a model of the above form using a deep generative model with a \textit{random} number of layers. Alternatively, one can think of \eqref{eq:generative_model} as a \textit{neural stochastic differential equation}, in analogy to the neural ODE framework of \citet{chen18neuralODE}. 

There are three main questions that are natural to ask concerning the usefulness of such models: How expressive can they be? How might one sample from such a diffusion process? How might one perform inference on it? As our first contribution, we provide a unified view of sampling and inference through the lens of stochastic control. In particular, by adding a \textit{control} $u_t$ to the drift of some reference diffusion, one can obtain a desired distribution at $t=1$, and the minimal-cost control that yields exact sampling is given by the so-called F\"ollmer drift \citep{follmer1985reversal,daipra1991reciprocal,lehec2013entropy,eldan2018diffusion}.  Complementarily, we show that any control $u_t$ added to the drift $b(\cdot,t;\theta)$ in \eqref{eq:latent_diffusion} leads to a variational upper bound on the log-likelihood of a given tuple of observations $(y_1,\ldots,y_n)$. Variational inference then reduces to minimizing the expected control cost over a tractable class of controls. While we provide a unifying viewpoint that captures both sampling and inference, we emphasize that this is a synthesis of a number of existing results, and serves as a conceptual underpinning and motivation for our subsequent analysis. Specifically, after establishing that diffusion-based generative models can be effectively worked with, we explore their expressive power vis-\`a-vis neural nets: We show that, if the target density of $X_1$ can be efficiently approximated using a neural net, then the corresponding F\"ollmer drift can also be efficiently approximated by a neural net, such that the terminal law of the diffusion with this approximate drift is $\eps$-close to the target density in Kullback--Leibler divergence. Finally, we investigate unbiased simulation methods for generative models with underlying diffusion processes and provide bounds on the variance of the resulting estimators.

\subsection{Method of analysis:  an overview}

To arrive at the unified perspective of sampling and inference, we begin by formulating a stochastic control problem that captures all of our desiderata: sampling from a target probability law $\mu$ at terminal time $t=1$; a set of tractable controls that might be used to take it there; and an appropriate notion of cost with that captures both the `control effort' and the terminal cost that quantifies the discrepancy between the final probability law and the target measure $\mu$. 

Our first result, stated in Theorem~\ref{thm:log_g_cost}, is an explicit characterization of the value function of this control problem, which has a free-energy interpretation and can be understood from an information-theoretic viewpoint: the Kullback--Leibler divergence between the law of the path of the uncontrolled diffusion and that of the path of the controlled diffusion is the expected total work done by the control. The negative free energy with respect to the uncontrolled process is a lower bound on that of the controlled process after accounting for the work done, and equality is achieved by the optimal control. As pointed out above, this result is a synthesis of a number of existing results, and its main purpose is to motivate the use of controlled diffusions in probabilistic generative modeling.

We next examine the \textit{expressiveness} of these generative models, which refers to their ability to generate samples from a given target distribution for $X_1$ when the observation model $q(\cdot|\cdot)$ in \eqref{eq:observation_model} is fixed. In Theorem~\ref{thm:expressive}, we provide quantitative guarantees for obtaining approximate samples from a given target distribution $\mu$ for $X_1$ when the drift $b$ in \eqref{eq:latent_diffusion} is restricted to be a multilayer feedforward neural net. Specifically, we show that, if the density $f$ of $\mu$ with respect to the standard Gaussian measure on $\Reals^d$ can be efficiently approximated by a feedforward neural net, then the corresponding F\"ollmer drift can also be approximated efficiently by a neural net. Moreover, this approximate F\"ollmer drift yields a diffusion $\{\wh{X}_t\}$, such that $\wh{\mu} = {\rm Law}(\wh{X}_1)$ satisfies $D(\mu||\wh{\mu}) \leq \eps$ for a given accuracy $\eps > 0$. Under some  assumptions on the smoothness of $f$ and $\nabla f$ and on their uniform approximability by neural nets, the proof proceeds as follows: First, we show that the F\"ollmer drift can be approximated by a neural net uniformly over a given compact subset of $\Reals^d$ and for all $t \in[0,1]$. Then, to show that the terminal distribution resulting from this approximation is $\eps$-close to $\mu$ in KL-divergence, we use Girsanov's theorem to relate $D(\mu \| \wh{\mu})$ to the expected squared error between the F\"ollmer drift and its neural-net approximation.

Finally, we discuss the issue of unbiased simulation with the goal of estimating expected values of functions of $X_1$. The standard Euler--Maruyama scheme \citep[Chap.~7]{graham2013simulation} is straightforward, but produces a biased estimator. Typically, one uses Monte Carlo sampling to reduce the variance; if the estimator is biased, then the variance will be reduced by a factor of $N^{1-\delta}$ for some $\delta \in (0,1)$, instead of the optimal reduction by the factor of $N$, for $N$ Monte Carlo runs. One way to obtain an \textit{unbiased} estimator is to employ a \textit{random} discretization of the time interval $[0,1]$, where the sampling times are generated by a point process on the real line. Unbiased simulation schemes of this type have been proposed and analyzed by \citet{bally2015parametrix}, \citet{andersson2017unbiased}, and \citet{henry2017unbiased}. Our final result, Theorem~\ref{thm:unbiased}, builds on the latter work and presents an unbiased, finite-variance simulation scheme. Conceptually, the simulation scheme can be thought of as a deep latent Gaussian model in the sense of \citet{rezende2014stochbackprop}, but with a random number of layers. Unfortunately, the variance of the resulting estimator can exhibit exponential dependence on dimension. We show why this is the case via an analysis of the moment-generating function of the point process used to generate the random mesh and propose alternatives to reduce the variance.

\subsection{Notation} The Euclidean norm of a vector $x \in \Reals^d$ will be denoted by $\|x\|$, the transpose of a vector or a matrix will be indicated by $(\cdot)^\trn$. The $d$-dimensional Euclidean ball of radius $R$ centered at the origin will be denoted by $\sB^d(R)$. The standard Gaussian measure on $\Reals^d$ will be denoted by $\gamma_d$. The Euclidean heat semigroup $Q_t$, $t \ge 0$, acts on measurable functions $f : \Reals^d \to \Reals$ as follows:
\begin{align}\label{eq:heat}
	Q_t f(x) \deq \int_{\Reals^d} f(x+\sqrt{t}z)\gamma_d(\dif z) = \E[f(x+\sqrt{t}Z)], \qquad Z \sim \gamma_d.
\end{align}
A function $g : \Reals^d \times [0,1] \to \Reals$ is of class $C^{2,1}$ if it is twice continuously differentiable in the space variable $x \in \Reals^d$ and once continuously differentiable in the time variable $t \in [0,1]$.

\section{Exact sampling and variational inference: a unified stochastic control viewpoint}
\label{sec:control}

Before addressing the specific questions posed in the Introduction, we aim to demonstrate that both sampling and variational inference in generative models of the form \eqref{eq:generative_model} can be viewed through the lens of stochastic control. We give a brief description of the relevant ideas in Appendix~\ref{app:control}; the book by \cite{fleming1975control} is an excellent and readable reference.

\subsection{A stochastic control problem}

Let $(\Omega,\cF,\{\cF_t\},\PP)$ be a probability space with a complete and right-continuous filtration $\{\cF_t\}$, and let $W = \{W_t\}$ be a standard $d$-dimensional Brownian motion adapted to $\{\cF_t\}$. Consider the It\^o diffusion process
\begin{align}\label{eq:generic_diffusion}
	\dif X_t = b(X_t,t) \dif t + \dif W_t, \qquad t \in [0,1];\, X_0 = x_0
\end{align}
where the drift $b : \Reals^d \times [0,1] \to \Reals^d$ is sufficiently well-behaved (say, bounded and Lipschitz). Then the process $\{X_t\}$ admits a \textit{transition density}, i.e., a family of functions $p_{s,t} : \Reals^d \times \Reals^d \to \Reals_+$ for all $0 \le s < t \le 1$, such that, for all points $x,y \in \Reals^d$ and all Borel sets $A \subset \Reals^d$,
\begin{align}\label{eq:transition_density}
	\PP[X_t \in A| X_s = x] = \int_A p_{s,t}(x,y) \dif y
\end{align}
(see, e.g., \citet[Chap.~V]{protter2005SDE}).

Consider the following stochastic control problem: Let $\cU$ be the set of \textit{controls}, i.e., measurable functions $u : \Reals^d \times [0,1] \to \Reals^d$. Any  $u \in \cU$ defines a diffusion process $X^u = \{X^u_t\}_{t \in [0,1]}$ by
\begin{align}\label{eq:controlled_diffusion}
	\dif X^u_t = \left(b(X^u_t,t) + u(X^u_t,t) \right)\dif t + \dif W_t, \qquad t \in [0,1];\, X^u_0 = x_0.
\end{align}
We say that $X^u$ is a diffusion controlled by $u$. Let a function $g : \Reals^d \to (0,\infty)$ be given. For each $u \in \cU$, we define the family of \textit{cost-to-go functions}
\begin{align}\label{eq:log_g_cost}
	J^u(x,t) \deq \E\Bigg[ \frac{1}{2}\int^1_t \|u_s\|^2\dif s -\log g(X^u_1)\Bigg|X^u_t = x\Bigg], \qquad x \in \Reals^d, t \in [0,1]
\end{align}
where $u_s$ is shorthand for $u(X^u_s,s)$. The \textit{value functions} $v : \Reals^d \times [0,1] \to \Reals_+$ are defined by
\begin{align}\label{eq:value_function}
	v(x,t) \deq \inf_{u \in \cU} J^u(x,t),
\end{align}
and we say that a control $u^* \in \cU$ is \textit{optimal} if $J^{u^*}(x,t) = v(x,t)$ for all $x$ and $t$.  The following theorem is, essentially, a synthesis of the results of \cite{pavon1989control} and \cite{daipra1991reciprocal}:

\begin{theorem}\label{thm:log_g_cost} Consider the control problem \eqref{eq:log_g_cost}. The value function $v$ is given by
	\begin{align}\label{eq:value_fctn}
		v(x,t) = -\log \E[g(X_1)|X_t = x],
	\end{align}
where the conditional expectation is with respect to the uncontrolled diffusion process \eqref{eq:generic_diffusion}. Moreover, the optimal control $u^*$ is given by $u^*(x,t) = -\nabla v(x,t)$, where the gradient is taken with respect to the space variable $x \in \Reals^d$, and the corresponding controlled diffusion $\{X^*_t\} = \{X^{u^*}_t\}$ has the transition density
\begin{align}\label{eq:optimal_transition_density}
p^*_{s,t}(x,y) = p_{s,t}(x,y)\exp\left(v(x,s)-v(y,t)\right),
\end{align}
where $p_{s,t}(\cdot)$ is the transition density \eqref{eq:transition_density} of the uncontrolled process.
\end{theorem}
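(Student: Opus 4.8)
The plan is to follow the classical dynamic-programming route: guess the value function from the Hamilton--Jacobi--Bellman (HJB) equation via a logarithmic (Cole--Hopf) substitution, confirm the guess by a verification argument, and finally identify the law of the optimally controlled diffusion through a Doob $h$-transform. For the dynamics \eqref{eq:controlled_diffusion} with running cost $\tfrac12\|u\|^2$ and terminal cost $-\log g$, the HJB equation associated with \eqref{eq:log_g_cost}--\eqref{eq:value_function} is
\[
	\partial_t v + b\cdot\nabla v + \tfrac12\Delta v + \inf_{u\in\Reals^d}\Big(u\cdot\nabla v + \tfrac12\|u\|^2\Big) = 0, \qquad v(\cdot,1) = -\log g,
\]
and the pointwise infimum is attained at $u = -\nabla v$ with value $-\tfrac12\|\nabla v\|^2$. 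Substituting $\phi \deq e^{-v}$ linearizes this into the backward Kolmogorov equation $\partial_t\phi + b\cdot\nabla\phi + \tfrac12\Delta\phi = 0$ with $\phi(\cdot,1) = g$, whose Feynman--Kac solution is $\phi(x,t) = \E[g(X_1)\mid X_t = x]$ for the uncontrolled process \eqref{eq:generic_diffusion}. This produces the candidate value function \eqref{eq:value_fctn} together with the candidate optimal control $u^*(x,t) = -\nabla v(x,t) = \nabla\log\phi(x,t)$.

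To make this rigorous I would run a verification argument rather than appeal directly to HJB theory. Assuming (or first establishing, under the boundedness/Lipschitz hypotheses on $b$ and suitable positivity and growth of $g$) that $\phi$, and hence $v$, is of class $C^{2,1}$ with controlled growth, I apply It\^o's formula to $v(X^u_s,s)$ along any admissible controlled path, integrate from $t$ to $1$, and take conditional expectations given $X^u_t = x$. Using the HJB identity $\partial_t v + b\cdot\nabla v + \tfrac12\Delta v = \tfrac12\|\nabla v\|^2$ and the terminal condition $v(\cdot,1) = -\log g$, the drift terms combine and, after completing the square, yield
\[
	J^u(x,t) - v(x,t) = \E\Big[\tfrac12\int_t^1 \|u(X^u_s,s) + \nabla v(X^u_s,s)\|^2\,\dif s \,\Big|\, X^u_t = x\Big] \ge 0,
\]
with equality precisely when $u = -\nabla v$. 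This simultaneously proves the value-function formula \eqref{eq:value_fctn} and the optimality of $u^*$. The step requiring care is discharging the martingale property of the stochastic integral $\int \nabla v\cdot\dif W$ (so it drops out under the conditional expectation) together with the applicability of It\^o's formula; this is where integrability conditions on $g$ and its logarithmic derivatives, and a restriction to an appropriate class of admissible controls, enter, and I expect this to be the main obstacle, the formal manipulations themselves being routine.

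For the transition density \eqref{eq:optimal_transition_density} I would use the $h$-transform viewpoint. Since $\phi$ solves the backward Kolmogorov equation, It\^o's formula shows that $M_t \deq \phi(X_t,t)/\phi(x_0,0)$ is a positive $\PP$-local martingale, which is a true martingale under the same integrability assumptions on $g$; let $\PP^*$ be the measure with $\dif\PP^*/\dif\PP|_{\cF_t} = M_t$. By Girsanov's theorem, under $\PP^*$ the process $X$ satisfies $\dif X_t = \big(b(X_t,t) + \nabla\log\phi(X_t,t)\big)\dif t + \dif \tilde W_t = \big(b(X_t,t) - \nabla v(X_t,t)\big)\dif t + \dif\tilde W_t$ for a $\PP^*$-Brownian motion $\tilde W$, so $\PP^*$ is exactly the law of the optimally controlled diffusion $\{X^*_t\}$. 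Computing its transition probabilities via the change of measure and the Markov property,
\[
	\PP^*[X_t\in A\mid X_s = x] = \E\Big[\frac{M_t}{M_s}\,\1_{\{X_t\in A\}}\,\Big|\,X_s = x\Big] = \int_A \frac{\phi(y,t)}{\phi(x,s)}\,p_{s,t}(x,y)\,\dif y,
\]
and substituting $\phi = e^{-v}$ gives $p^*_{s,t}(x,y) = p_{s,t}(x,y)\exp\big(v(x,s) - v(y,t)\big)$, as claimed.
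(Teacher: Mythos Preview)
Your proposal is correct and follows essentially the same route as the paper: the logarithmic (Cole--Hopf) transformation $v=-\log h$ together with the Feynman--Kac representation of $h$, followed by a verification argument and identification of the optimal law. The only differences are in presentation rather than substance: the paper invokes the verification theorem from \cite{fleming1975control} as a black box whereas you carry out the It\^o-plus-completing-the-square computation explicitly, and for the transition density the paper cites \cite{jamison1975markov,daipra1991reciprocal} while you derive it directly via the Doob $h$-transform/Girsanov argument---both of which are exactly the arguments underlying those references.
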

This result, proved in Appendix~\ref{app:control}, also admits an information-theoretic interpretation. Let $\PP^0$ denote the probability law of the path $X_{[0,1]}$ of the uncontrolled diffusion process \eqref{eq:generic_diffusion} and let $\PP^u$ denote the corresponding object for the controlled diffusion \eqref{eq:controlled_diffusion}. Since $X$ and $X^u$ differ from each other by a change of drift, the probability measures $\PP^u$ and $\PP^0$ are mutually absolutely continuous, and the Radon--Nikodym derivative $\dif\PP^u/\dif\PP^0$ is given by the Girsanov formula \citep{protter2005SDE}
\begin{align}\label{eq:Girsanov}
	\frac{\dif\PP^u}{\dif\PP^0} = \exp\left(-\int^1_0 u_t^\trn \dif W_t + \frac{1}{2}\int^1_0 \|u_t\|^2\dif t\right),
\end{align}
where $u_t^\trn \dif W_t \deq \sum^d_{i=1} u_{i,t} \dif W_{i,t}$, with $u_{i,\cdot}$ and $\dif W_{i,\cdot}$ denoting the $i$th coordinates of $u$ and $W$ respectively. From \eqref{eq:Girsanov}, we can calculate the Kullback--Leibler divergence between $\PP^u$ and $\PP^0$:
\begin{align}\label{eq:KL_u0}
	D(\PP^u\|\PP^0) = \E_{\PP^u}\left[\log \frac{\dif\PP^u}{\dif\PP^0}\right] = \E\left[\frac{1}{2}\int^1_0 \|u_t\|^2 \dif t\right].
\end{align}
Therefore, by Theorem~\ref{thm:log_g_cost}, for any control $u \in \cU$, we can write
\begin{align}\label{eq:KL_optimality}
	- \log \E[g(X_1)|X_0 = x] \le D(\PP^u \| \PP^0) - \E[\log g(X^u_1)|X^u_0 = x],
\end{align}
with equality if and only if $u = u^*$. An inequality of this form holds more generally for real-valued measurable functions of the entire path $X_{[0,1]}$ \citep{boue1998variational}.

We will now demonstrate how both the problem of sampling and the problem of variational inference can be addressed via the above theorem.

\subsection{Exact sampling: the F\"ollmer drift}
\label{ssec:follmer}

Recall that, in the context of exact sampling, the objective is to construct a diffusion process $\{X_t\}_{t \in [0,1]}$, such that $X_1$ has a given target distribution $\mu$. We will consider the case when $\mu$ is absolutely continuous with respect to the standard Gaussian measure $\gamma_d$ and let $f$ denote the Radon--Nikodym derivative $\dif \mu/\dif \gamma_d$. This problem goes back to a paper of \cite{schrodinger1931bridge}; for rigorous treatments, see, e.g., \cite{jamison1975markov}, \cite{follmer1985reversal}, \cite{daipra1991reciprocal}, \cite{lehec2013entropy}, \cite{eldan2018diffusion}. The derivation we give below is not new (see, e.g., \citet[Thm.~3.1]{daipra1991reciprocal}), but the route we take is somewhat different in that we make the stochastic control aspect more explicit. 

We take $b(x,t) \equiv 0$ and $X_0 = 0$ in \eqref{eq:generic_diffusion}. Then the diffusion process $\{X_t\}$ is simply the standard $d$-dimensional Brownian motion $\{W_t\}$, which has the Gaussian transition density
\begin{align*}
	p_{s,t}(x,y) = \frac{1}{(2\pi (t-s))^{d/2}} \exp\left(-\frac{1}{2(t-s)}\|x-y\|^2\right).
\end{align*}
Now consider the control problem \eqref{eq:log_g_cost} with $g = f$. By Theorem~\ref{thm:log_g_cost}, the value function $v$ is given by $v(x,t) = - \log \E[f(W_1)|W_t = x]$, and can be computed explicitly. For $0 \le t < 1$, we have
\begin{align*}
	e^{-v(x,t)} &= \E[f(W_1)|W_t = x] \\
	&= \frac{1}{(2\pi(1-t))^{d/2}} \int_{\Reals^d} f(y) \exp\left(-\frac{1}{2(1-t)}\|x-y\|^2\right)\dif y \\
	&= Q_{1-t}f(x),
\end{align*}
where $Q$ denotes the Euclidean heat semigroup \eqref{eq:heat}. Hence, $v(x,t) = -\log Q_{1-t}f(x)$, and the optimal diffusion process $\{X^*_t\}$ has the drift $u^*(x,t) = -\nabla v(x,t) = \nabla \log Q_{1-t}f(x)$. Following \cite{lehec2013entropy} and \cite{eldan2018diffusion}, we will refer to $u^*$ as the \textit{F\"ollmer drift} in the sequel.

It remains to show that $X^*_1 \sim \mu$. Using the formula \eqref{eq:optimal_transition_density} for the transition density of $X^*$ together with the fact that $e^{-v(y,1)} =  f(y)$ and $e^{-v(0,0)} = \E[f(W_1)] = \int f\dif\gamma_d = 1$, we see that $p^*_{0,1}(0,y)\dif y = f(y)\gamma_d(\dif y)$. Then, for any Borel set $A \subseteq \Reals^d$,
\begin{align*}
	\PP[X^*_1 \in A] = \int_A p^*_{0,1}(0,y) \dif y = \int_A f(y) \gamma_d(\dif y) = \mu(A).
\end{align*}
Moreover, using the entropy inequality \eqref{eq:KL_optimality}, we can show that the F\"ollmer drift is optimal in the following strong sense: Consider any control $u \in \cU$ with $X^u_0 = 0$ and with the property that ${\rm Law}(X^u_1) = \mu$. For any such control,
\begin{align*}
	\E[\log f(X^u_1)|X^u_0=0] = \int_{\Reals^d}\dif \mu\log f = \int_{\Reals^d}\dif\mu \log \frac{\dif\mu}{\dif\gamma_d} = D(\mu \| \gamma_d),
\end{align*}
while clearly $\log\E[f(W_1)]=0$. Therefore, it follows from \eqref{eq:KL_optimality} that, for any such control $u$,
\begin{align*}
	D(\PP^u\|\PP^0) = \frac{1}{2}\E\left[\int^1_0 \|u_t\|^2\dif t\right] \ge D(\mu \| \gamma_d),
\end{align*}
with equality if and only if $u = u^*$. Thus, the F\"ollmer drift has the minimal `energy' among all admissible controls that induce the distribution $\mu$ at $t=1$, and this energy is precisely the Kullback--Leibler divergence between $\mu$ and the standard Gaussian measure $\gamma_d$ \citep{daipra1991reciprocal,lehec2013entropy,eldan2018diffusion}.

\subsection{Variational inference}

We now turn to the problem of variational inference. We are given an $n$-tuple of observations $\bd{y} = (y_1,\ldots,y_n) \in \sY^n$, and wish to upper-bound the negative log-likelihood
\begin{align*}
	L_n(\bd{y};\theta) \deq \frac{1}{n} \sum^n_{i=1} L(y_i;\theta),
\end{align*}
where $L(y;\theta) \deq - \log \E[q(y|X_1)]$ and $\{X_t\}$ is the diffusion process \eqref{eq:generative_model}.

We  take $b = b(\cdot,\cdot;\theta)$ in \eqref{eq:generic_diffusion} and consider the control problem \eqref{eq:log_g_cost} with $g(x) = q(y|x)$ for some fixed $y \in \sY$. Then, by Theorem~\ref{thm:log_g_cost}, any control $u \in \cU$ gives rise to an upper bound on $L(y;\theta)$:
\begin{align*}
	L(y;\theta) \le  \E\Bigg[\frac{1}{2}\int^1_0 \|u_t\|^2 \dif t - \log q(y|X^u_1)\Bigg|X^u_0 = x\Bigg] =: F^u(y; \theta),
\end{align*}
where the quantity on the right-hand side can be thought of as the \textit{variational free energy} that depends on the choice of the control $u$, and equality is achieved when $u = u^*$. While the structure of the optimal control $u^*$ is described in Theorem~\ref{thm:log_g_cost}, it may not be possible to derive it in closed form. However, we can fix a class $\tilde{\cU} \subset \cU$ of tractable suboptimal controls and upper-bound $L(y;\theta)$ by $\inf_{u \in \tilde{\cU}} F^u(y;\theta)$. For example, we can take $\tilde{\cU}$ to consist of all controls of the form $u(x,t) = \phi - b(x,t;\theta)$ for some $\phi \in \Reals^d$. In that case, $X^u_t$ is the sum of the Brownian motion $W_t$ and the affine drift $x + t \phi$, and consequently
\begin{align*}
	F^u(y;\theta) = \E\Bigg[\frac{1}{2}\int^1_0 \|\phi - b(x + t\phi + W_t; \theta)\|^2 \dif t - \log q(y|x + \phi + W_1)\Bigg],
\end{align*}
where the expectation is taken with respect to the standard Brownian motion $W$. Another possiblity is to consider controls of the form $u(x,t) = A x - b(x,t;\theta)$, for some $A \in \Reals^{d \times d}$. The corresponding controlled diffusion is the Ornstein--Uhlenbeck process $X^u_t = e^{At}x + \int^t_0 e^{A(t-s)}\dif W_s$, and the variational free energy can be minimized over $A \in \Reals^{d \times d}$.

\section{Expressiveness}
\label{sec:expressive}

Now that we have shown that generative models of the form \eqref{eq:generative_model} allow for both sampling and variational inference, we turn to the analysis of their \textit{expressiveness}. Specifically, our objective is to show that, by working with a suitable structured class of drifts $b(\cdot,\cdot;\theta)$, we can achieve approximate sampling from a rich class of distributions at the terminal time $t=1$.

Let $\mu$ be the target probability measure for $X_1$. We assume that $\mu$ is absolutely continuous with respect to $\gamma_d$ and let $f$ denote the Radon--Nikodym derivative $\dif\mu/\dif\gamma_d$. From Section~\ref{ssec:follmer} we know that the diffusion process governed by the It\^o SDE
\begin{align}
	\dif X_t = b(X_t,t) \dif t + \dif W_t, \qquad X_0 = 0
\end{align}
with the F\"ollmer drift $b(x,t) = \nabla \log Q_{1-t}f(x)$ has the property that $\mu = {\rm Law}(X_1)$, and, moreover, it is optimal in the sense that it minimizes the `energy' $\frac{1}{2}\int^1_0 \E\|u_t\|^2 \dif t$ among all adapted drifts $\{u_t\}$ that result in distribution $\mu$ at time $t=1$. The main result of this section is as follows: If the Radon--Nikodym derivative $f$ can be approximated efficiently by multilayer feedforward neural nets, then, for any $\eps > 0$, there exists a drift $\wh{b}(x,t) = \wh{b}(x,t;\theta)$ that can be implemented \textit{exactly} by a neural net whose parameters $\theta$ do not depend on time or space, and the terminal law $\wh{\mu} \deq {\rm Law}(\wh{X}_1)$ of the diffusion process
\begin{align}
	\dif\wh{X}_t = \wh{b}(\wh{X}_t,t) \dif t + \dif W_t, \qquad \wh{X}_0 = 0
\end{align}
is an $\eps$-approximation to $\mu$ in the KL-divergence: $D(\mu\|\wh{\mu}) \le \eps$. Moreover, the size of the neural net that implements the approximate F\"ollmer drift $\wh{b}$ can be estimated explicitly in terms of the size of a suitable approximating neural net for $f$.

We begin by imposing some assumptions on $f$. The first assumption is needed to guarantee enough regularity for the F\"ollmer drift:

\begin{assumption}\label{as:f} The function $f$ is differentiable, both $f$ and $\nabla f$ are $L$-Lipschitz, and there exists a constant $c \in (0,1]$, such that $f \ge c$ everywhere.
\end{assumption}
\sloppypar\noindent This assumption is satisfied, for example, by Gibbs measures of the form $\mu(\dif x) = Z^{-1}e^{-\frac{1}{2}\|x\|^2-F(x)}\dif x$ with a differentiable potential $F : \Reals^d \to \Reals_+$, such that both $F$ and $\nabla F$ are Lipschitz, and $F$ is bounded from above; see Appendix~\ref{app:f_regularity} for details.

Next, we introduce the assumptions pertaining to the approximability of $f$ by neural nets. Let $\sigma : \Reals \to \Reals$ be a fixed nonlinearity. Given a vector $w \in \Reals^n$ and scalars $\alpha,\beta$, define the function
\begin{align*}
	N^\sigma_{w,\alpha,\beta} : \Reals^n \to \Reals, \qquad N^\sigma_{w,\alpha,\beta}(x) \deq \alpha\cdot\sigma\left(w^\trn x+\beta\right).
\end{align*}
For $\ell \ge 2$, we define the class $\cN^\sigma_\ell$ of $\ell$-layer feedforward neural nets  with activation function $\sigma$ recursively as follows: $\cN^\sigma_2$ consists of all functions of the form $x \mapsto \sum^m_{i=1}N^\sigma_{w_i,\alpha_i,\beta_i}(x)$ for all $m \in \Naturals$, $w_1,\ldots,w_m \in \Reals^d$, $\alpha_1,\ldots,\alpha_m,\beta_1,\ldots,\beta_m \in \Reals$, and, for each $\ell \ge 2$,
\begin{align*}
	\cN^\sigma_{\ell+1} &\deq \bigcup_{k \ge 1}\bigcup_{m \ge 1}\Bigg\{ x \mapsto \sum^m_{i=1} N^\sigma_{w_i,\alpha_i,\beta_i}(h_1(x),\ldots,h_k(x)) :\nonumber\\
	&\qquad \alpha_1,\ldots,\alpha_m,\beta_1,\ldots,\beta_m \in \Reals, w_1,\ldots,w_m \in \Reals^k, h_1,\ldots,h_k \in \cN^\sigma_{\ell}\Bigg\}.
\end{align*}
Thus, each element of $\cN^\sigma_\ell$ is a function that represents computation by a directed acyclic graph, where each node receives inputs $u_1,\ldots,u_k$, performs a computation of the form $(u_1,\ldots,u_k) \mapsto \sigma(w_1 u_1 + \ldots + w_k u_k + \beta)$, and communicates the outcome of the computation to all the nodes in the next layer. We refer to $\ell$ as the \textit{depth} of the neural net, and define the \textit{size} of the neural net as the total number of nodes in its computation graph. We will denote by $\cN^\sigma_{\ell,s}$ the collection of all neural nets with depth $\ell$ and size $s$. All these definitions extend straightforwardly to the case of neural nets with vector-valued output and to the case where each node may have a different activation function.

We assume that the activation function $\sigma$ is differentiable and \textit{universal}, in the sense that any univariate Lipschitz function which is nonconstant on a bounded interval can be approximated arbitrarily well by an element of $\cN^\sigma_2$:

\begin{assumption}\label{as:universal_activation} The activation function $\sigma : \Reals \to \Reals$ is differentiable. Moreover, there exists a constant $c_\sigma > 0$ depending only on $\sigma$, such that the following holds: For any $L$-Lipschitz function $h : \Reals \to \Reals$ which is constant outside the interval $[-R,R]$ and for any $\delta > 0$, there exist real numbers $a, \{(\alpha_i,\beta_i,\gamma_i)\}^m_{i=1}$, where $m \le c_\sigma \frac{RL}{\delta}$, such that the function
	\begin{align}\label{eq:2layer}
		\tilde{h}(x) = a + \sum^m_{i=1} \alpha_i \sigma(\beta_i x + \gamma_i)
	\end{align}
	satisfies $\sup_{x \in \Reals}|\tilde{h}(x)-h(x)| \le \delta$.
\end{assumption}
\begin{remark} {\em Apart from differentiability, this is the same assumption made by \cite{eldan2016depth}. For example, it holds for differentiable sigmoidal activation functions, i.e., monotonic functions that satisfy $\lim_{u \to -\infty}\sigma(u) = a$ and $\lim_{u \to +\infty}\sigma(u) = b$ for some $a \neq b$. The popular rectified linear unit (or ReLU) activation function $u \mapsto u \vee 0$ is universal in the above sense but not differentiable. However, we can replace it by the differentiable softplus function $u \mapsto \log(1+e^{cu})$, where increasing the value of $c > 0$ results in finer approximations to the ReLU. Also, note that the function $\tilde{h}$ differs from the elements of $\cN^\sigma_2$ by the presence of the constant term $a$. However, the constant function $x \mapsto a$ can be implemented by $N^\sigma_{0,a/\sigma(z),z}$, for any $z \in \Reals$ such that $\sigma(z) \neq 0$. Thus, we will refer to functions of the form \eqref{eq:2layer} as $2$-layer neural networks of size $m+1$.}
\end{remark}
\noindent  We also make the following assumption regarding approximability of $f$ by neural nets:

\begin{assumption}\label{as:f_nn_apx} For any $R > 0$ and $\eps > 0$, there exists a neural net $\wh{f} \in \cN^\sigma_{\ell,s}$ with $\ell,s \le {\rm poly}(1/\eps,d,L,R)$, such that
	\begin{align}
		\sup_{x \in \sB^d(R)} |f(x)-\wh{f}(x)| \le \eps \quad \text{and} \quad \sup_{x \in \sB^d(R)} \|\nabla f(x) - \nabla \wh{f} (x) \| \le \eps.
	\end{align}
\end{assumption}
\begin{remark}{\em Typical results on neural net approximation are concerned with approximating a given function uniformly on a given compact set. By contrast, Assumption~\ref{as:f_nn_apx} requires uniform approximability of both $f$ and its gradient $\nabla f$ on a compact set by some neural net $\wh{f}$ and its gradient $\nabla \wh{f}$. Such simultaneous approximation guarantees can also be found in the literature, see, e.g., \cite{hornik1990nn_apx,yukich95nnapx,li1996nn_apx}. See \citet{safran2017depth} for a discussion of various trade-offs between depth and width (maximum number of neurons per layer) in neural net approximation.}
\end{remark}

We are now in a position to state the main result of this section:

\begin{theorem}\label{thm:expressive} Suppose Assumptions~\ref{as:f}--\ref{as:f_nn_apx} are in force. Let $L$ denote the maximum of the Lipschitz constants of $f$ and $\nabla f$. Then, for any $0 < \eps < 16L^2/c^2$, there exists a neural net $\wh{v} : \Reals^d \times [0,1] \to \Reals^d$ with size polynomial in $1/\eps,d,L,c,1/c$, such that the activation function of each neuron is an element of the set $\{\sigma,\sigma',{\rm ReLU}\}$, and the following holds: If $\{\wh{X}_t\}_{t \in [0,1]}$ is the diffusion process governed by the It\^o SDE
	\begin{align}
		\dif\wh{X}_t &= \wh{b}(\wh{X}_t,t)\dif t + \dif W_t, \qquad \wh{X}_0 = 0
	\end{align}
with the drift $\wh{b}(x,t) = \wh{v}(x,\sqrt{1-t})$, then $\wh{\mu} \deq {\rm Law}(\wh{X}_1)$ satisfies $D(\mu \| \wh{\mu}) \le \eps$.
\end{theorem}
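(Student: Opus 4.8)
The plan is to bound $D(\mu\|\wh\mu)$ via Girsanov's theorem in terms of the squared $L^2$-error between the F\"ollmer drift $b(x,t)=\nabla\log Q_{1-t}f(x)$ and its neural-net approximation $\wh b$, and then to construct $\wh b$ so that this error is small. Concretely, since $\mu={\rm Law}(X_1)$ for the F\"ollmer diffusion and $\wh\mu={\rm Law}(\wh X_1)$ for the approximate one, the data-processing inequality applied to the path measures gives $D(\mu\|\wh\mu)\le D(\PP^{b}\|\PP^{\wh b})$, and by the Girsanov computation analogous to \eqref{eq:KL_u0} this equals $\frac12\E\big[\int_0^1\|b(X_t,t)-\wh b(X_t,t)\|^2\dif t\big]$, where the expectation is under the F\"ollmer diffusion (equivalently one can expand $\dif\PP^{b}/\dif\PP^{\wh b}$ directly). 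So it suffices to make $\int_0^1\E\|b(X_t,t)-\wh b(X_t,t)\|^2\dif t\le 2\eps$.

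The construction of $\wh b$ proceeds in a few steps. First, using Assumption~\ref{as:f} one shows that $b(x,t)=\nabla Q_{1-t}f(x)/Q_{1-t}f(x)$ is well-behaved: the denominator is bounded below by $c$ (heat-semigroup averaging preserves the lower bound $f\ge c$), the numerator $\nabla Q_{1-t}f=Q_{1-t}\nabla f$ is $L$-Lipschitz and bounded in terms of $L$ and the Lipschitz constant, so $b$ is bounded and Lipschitz uniformly in $t$; this also gives a priori control (sub-Gaussian-type tail / second-moment bounds) on $\wh X_t$ and $X_t$, so that we only need a good approximation on a ball $\sB^d(R)$ with $R={\rm poly}(d,L,1/c,\log(1/\eps))$, the tail contributions being $O(\eps)$. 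Second, reparametrize time by $s=\sqrt{1-t}$ so that $Q_{1-t}f=Q_{s^2}f$; the key observation is that $Q_{s^2}f(x)=\E[f(x+sZ)]$ and $Q_{s^2}\nabla f(x)=\E[\nabla f(x+sZ)]$, $Z\sim\gamma_d$, are \emph{jointly} smooth in $(x,s)$ on $\Reals^d\times[0,1]$ because $f,\nabla f$ are Lipschitz (differentiating under the expectation is justified), which is why we ask for the net $\wh v(x,\sqrt{1-t})$ rather than a direct function of $t$. Then Assumption~\ref{as:f_nn_apx} gives nets $\wh f,\wh{\nabla f}$ approximating $f$ and $\nabla f$ on $\sB^d(R)$ to accuracy $\delta$; composing with a Gaussian-quadrature / sampling layer (average over $x\mapsto \wh f(x+s z_j)$ at quadrature nodes $z_j$, with the multiplications $s z_j$ and affine maps implementable since the architecture allows ReLU gates and products can be built from $\sigma,\sigma'$ via polarization-type identities, or one simply treats $s$ as an extra input coordinate fed into the first layer) yields polynomial-size nets $\wh N(x,s)$ and $\wh D(x,s)$ approximating $Q_{s^2}\nabla f(x)$ and $Q_{s^2}f(x)$ uniformly on $\sB^d(R)\times[0,1]$ to accuracy $\delta$. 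Finally, since $\wh D\ge c/2>0$ after the approximation, the quotient $\wh v(x,s)=\wh N(x,s)/\wh D(x,s)$ is implementable by a net of the same polynomial order (division by a quantity bounded away from $0$ on the relevant range is a fixed Lipschitz univariate operation, approximable by Assumption~\ref{as:universal_activation}), and $\|\wh v(x,s)-b(x,\,1-s^2)\|\le C(L,c)\,\delta$ on $\sB^d(R)\times[0,1]$. Choosing $\delta\asymp\eps/{\rm poly}$ and tracking constants gives $\int_0^1\E\|b-\wh b\|^2\le 2\eps$, hence $D(\mu\|\wh\mu)\le\eps$; all sizes are polynomial in $1/\eps,d,L,c,1/c$ as claimed. (The restriction $\eps<16L^2/c^2$ enters in calibrating $R$ and the lower bound on $\wh D$.)

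The main obstacle is the step controlling the error of the \emph{quotient} near $t=1$, i.e. $s\to 0$: there $Q_{1-t}f\to f$ and $\nabla\log Q_{1-t}f\to\nabla\log f$, which is fine, but one must ensure the Girsanov integral $\int_0^1\E\|b-\wh b\|^2\dif t$ does not blow up — in the original $t$ variable the F\"ollmer drift has derivatives that can grow like $1/(1-t)$, which is exactly why the $s=\sqrt{1-t}$ substitution is used (it renders everything Lipschitz up to the endpoint and makes the time-integral harmless). A secondary technical point is justifying that a single net with time-independent parameters can realize $\wh b(x,t)=\wh v(x,\sqrt{1-t})$ for all $t$ simultaneously; this is handled by feeding $\sqrt{1-t}$ as an input, so it is an assumption on how the diffusion is driven rather than an extra approximation burden. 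Everything else — the heat-semigroup regularity estimates, the tail bounds on $X_t,\wh X_t$, and the bookkeeping of neural-net sizes under composition — is routine given Assumptions~\ref{as:f}--\ref{as:f_nn_apx}.
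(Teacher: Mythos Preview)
Your proposal is essentially the paper's own proof: Girsanov plus data processing to reduce $D(\mu\|\wh\mu)$ to $\tfrac12\int_0^1\E\|b(X_t,t)-\wh b(X_t,t)\|^2\dif t$ along the F\"ollmer diffusion, then split into a ball $\sB^d(R)$ and its complement, use a sample-average approximation of $Q_{s^2}f(x)=\E[f(x+sZ)]$ at fixed nodes $z_1,\dots,z_N$ (which is exactly your ``quadrature layer''; note that $x+sz_n$ is affine in $(x,s)$, so no product gate is needed there), swap in $\wh f$ from Assumption~\ref{as:f_nn_apx}, and implement the quotient $\nabla\wh\varphi/\wh\varphi$ via Lemma~\ref{lm:multi} and the cheap-gradient principle.

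There is one small but genuine gap. To control the outside-the-ball term you need a \emph{global} bound on $\|\wh b\|$, not just the uniform approximation on $\sB^d(R)$: the integrand is $\|b-\wh b\|^2\1\{X_t\notin\sB^d(R)\}$, and a neural net built only to be accurate on $\sB^d(R)$ can be arbitrarily large outside it, so the tail probability alone does not make this term small. The paper handles this by clipping each coordinate of the net to $[-2L/c,2L/c]$ via two ReLU units (this is where the ReLU activations in the statement, and the threshold $16L^2/c^2$ on $\eps$, come from). With that clip in place, $\|b-\wh b\|^2\le 9dL^2/c^2$ everywhere, and your tail bound on $X_t$ finishes the estimate. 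Add this step and your argument is complete.
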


\subsection{The proof of Theorem~\ref{thm:expressive}}

The proof relies on three key steps: First, we show that the heat semigroup $Q_t f(x)$ can be approximated by a finite sum of the form $\frac{1}{N}\sum_{n \le N}f(x+\sqrt{t}z_n)$ uniformly for all $x \in \sB^d(R)$ and all $t \in [0,1]$, where $z_1,\ldots,z_N \in \Reals^d$ lie in a ball of radius $\cO(\sqrt{d \log N})$.  This result is stated in Appendix~\ref{app:SAA_proof} and proved using empirical process methods. Next, replacing $f$ with a suitable neural net approximation $\wh{f}$, we build on this result to show that the F\"ollmer drift $\nabla \log Q_{1-t}f(x)$ can be approximated by a neural net using $\sigma$, $\sigma'$, and {\rm ReLU} as activation functions. This is the content of Theorem~\ref{thm:neural_net_apx} below (the proof is given in Appendix~\ref{app:neural_net_apx_proof}).  The third step uses Girsanov theory to upper-bound the approximation error that results from replacing the F\"ollmer drift by this neural net.

\begin{theorem}\label{thm:neural_net_apx} Let $0 < \eps < 4L/c$ and $R > 0$ be given. Then there exists a neural net $\wh{v} : \Reals^d \times [0,1] \to \Reals^d$ of size polynomial in $1/\eps,d,L,R,c,1/c$, such that the activation function of each neuron is an element of the set $\{\sigma,\sigma',{\rm ReLU}\}$, and the following holds:
\begin{align*}
	\sup_{x \in \sB^d(R)} \sup_{t \in [0,1]} \left\| \wh{v}(x,\sqrt{t}) - \nabla \log Q_{t}f(x)\right\| \le \eps
\end{align*}
and
\begin{align*}
	\max_{i \in [d]}\sup_{x \in \Reals^d} \sup_{t \in [0,1]} | \wh{v}_i(x,\sqrt{t}) | \le \frac{2L}{c}.
\end{align*}
\end{theorem}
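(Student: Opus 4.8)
The plan is to express the F\"ollmer drift as a ratio of heat-semigroup quantities, approximate each factor by a deterministic Gaussian-shift quadrature, substitute a neural-net surrogate $\wh f$ for the target density $f$, and finally realize the resulting (safeguarded) ratio as a net over $\{\sigma,\sigma',{\rm ReLU}\}$. Since $f$ is $C^1$ with $\|\nabla f(x)\| \le L$, differentiation under the Gaussian integral gives $\nabla Q_t f(x) = Q_t\nabla f(x) = \E[\nabla f(x+\sqrt t Z)]$, so the drift is the ratio $\nabla\log Q_t f(x) = Q_t\nabla f(x)/Q_t f(x)$. Working on an enlarged ball $\sB^d(R')$ with $R' \deq R + \cO(\sqrt{d\log N})$, Assumption~\ref{as:f} yields the a priori bounds $Q_t f \ge c$, $\,\|Q_t\nabla f\| \le L$, and $Q_t f \le B_{\max}$ for a polynomial bound $B_{\max}$; in particular every coordinate of the true drift is at most $L/c$ in magnitude, which is the target scale for the second conclusion.

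First I would import the quadrature result of Appendix~\ref{app:SAA_proof}, applied \emph{simultaneously} to $f$ and to the $d$ coordinates of $\nabla f$ (all $L$-Lipschitz and $\cO(\cdot)$-bounded, so a single realization of nodes serves all of them), producing $z_1,\dots,z_N$ in a ball of radius $\cO(\sqrt{d\log N})$, with $N = {\rm poly}(1/\eta,d,L,R)$, such that $\sup_{x\in\sB^d(R),\,t\in[0,1]}|Q_t f(x) - \frac1N\sum_{n\le N}f(x+\sqrt t z_n)| \le \eta$ and likewise for each $\partial_i f$. Then Assumption~\ref{as:f_nn_apx} at radius $R'$ and accuracy $\delta$ supplies $\wh f\in\cN^\sigma_{\ell,s}$ with $\ell,s\le{\rm poly}(1/\delta,d,L,R')$ and $\sup_{\sB^d(R')}\big(|f-\wh f|\vee\|\nabla f-\nabla\wh f\|\big)\le\delta$. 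Set $\wh B(x,u)\deq\frac1N\sum_{n\le N}\wh f(x+uz_n)$ and $\wh A(x,u)\deq\frac1N\sum_{n\le N}\nabla\wh f(x+uz_n)$; at $u=\sqrt t$ these are within $\eta+\delta$ of $Q_t f(x)$ and $Q_t\nabla f(x)$, uniformly on $\sB^d(R)\times[0,1]$.

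Next I would verify that everything is a small net. Each $(x,u)\mapsto x+uz_n$ is affine, so $\wh f(x+uz_n)$ is the net $\wh f$ with this map absorbed into its innermost layer, and $\wh B$ is a net of size $\cO(Ns)$ with activation $\sigma$; the net takes $u=\sqrt t\in[0,1]$ as input, as required. For $\wh A$ I need $\nabla\wh f$: since $\sigma$ is differentiable, the chain rule writes each $\partial_i\wh f$ as a computation built from the forward activations of $\wh f$ (activation $\sigma$), the per-node derivative factors $\sigma'(\cdot)$ (activation $\sigma'$), and products of pairs of such intermediate quantities, each of magnitude at most some $M$ on $\sB^d(R')$; a product of two inputs in $[-M,M]$ is realized to accuracy $\delta'$ by a ${\rm ReLU}$ net of size ${\rm poly}(\log M,\log(1/\delta'))$ via the standard ${\rm ReLU}$ approximation of $z\mapsto z^2$ and polarization. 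Thus $\wh A$ is a net over $\{\sigma,\sigma',{\rm ReLU}\}$ of size ${\rm poly}(N,s,\log M,\log(1/\delta'))$. Finally put
\[
\wh v(x,u)\deq\Pi\big(\wh A(x,u)\cdot\rho(\wh B(x,u)\vee(c/2))\big),
\]
where $\rho$ is a ${\rm ReLU}$ net approximating $z\mapsto1/z$ on $[c/2,B_{\max}]$, the product is another ${\rm ReLU}$ multiplication gate, and $(\cdot)\vee(c/2)$ and the coordinatewise clip $\Pi$ onto $[-2L/c,2L/c]$ are fixed ${\rm ReLU}$ maps. The clamp $\wh B\vee(c/2)$ (keeping the denominator away from $0$ even off the ball) and the clip $\Pi$ are exactly what make the global bound $\max_i\sup_{x\in\Reals^d}\sup_t|\wh v_i(x,\sqrt t)|\le 2L/c$ hold by construction.

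It remains to pick accuracies and account for errors. On $\sB^d(R)\times[0,1]$, writing $v=A/B$ with $\|A\|\le L$, $B\in[c,B_{\max}]$, and $\|\wh A-A\|,|\wh B-B|\le\eta_0$ (a polynomial multiple of $\eta+\delta$), once $\eta_0\le c/2$ one has $\wh B\vee(c/2)=\wh B$ and the elementary estimate $\|\wh A/\wh B-A/B\|\le 2\eta_0(B_{\max}+L)/c^2$; adding the $\cO(\delta')$ errors from $\rho$ and the multiplication gates and choosing $\eta,\delta,\delta'$ polynomially small in $\eps,1/d,1/L,1/R,c$ forces $\|\wh v-\nabla\log Q_t f\|\le\eps$ on the ball (clipping by $\Pi$ toward the region $[-L/c,L/c]^d$ containing the true drift never increases the error, and the hypothesis $\eps<4L/c$ is a convenient sufficient bound for these nested accuracy choices to be mutually consistent). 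Substituting the polynomial choices of $\eta,\delta,\delta',N,s,M$ makes the total size polynomial in $1/\eps,d,L,R,c,1/c$. The main obstacle is the $\nabla\wh f$ step: Assumption~\ref{as:f_nn_apx} bounds only the depth and size of $\wh f$ by polynomials, so for a genuinely deep surrogate the chain rule forces products of two input-dependent quantities (for two-layer sub-nets the gradient is simply another one-hidden-layer net with activation $\sigma'$ and no products), and one must both implement multiplication through the ${\rm ReLU}$ square and control the dynamic range $M$ of all intermediate quantities of $\wh f$ on $\sB^d(R')$ so that this square and the reciprocal $\rho$ are only ever evaluated on explicit bounded domains, at polylogarithmic cost. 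A cleaner route that sidesteps differentiating $\wh f$ is to replace $\nabla\wh f$ by a centered finite difference of $\wh f$, which uses only $\sigma$ and ${\rm ReLU}$ at the price of an extra $1/h$ amplification of the approximation error. A secondary, routine nuisance is the bookkeeping of how the $1/c$ and $1/c^2$ amplification in the ratio propagates into the final polynomial size.
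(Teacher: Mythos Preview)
Your overall plan---Gaussian quadrature for $Q_t f$ and $Q_t\nabla f$, substitute the Assumption~\ref{as:f_nn_apx} surrogate $\wh f$ on the enlarged ball, form the safeguarded ratio, then clip---is exactly the paper's route. The substantive differences are tactical. For the step you call ``the main obstacle'' (realizing $\nabla\wh\varphi$), the paper invokes the cheap gradient principle of automatic differentiation (Lemma~\ref{lm:cheap_g}, citing Griewank): it asserts that each $\partial_i\wh\varphi$ is computed \emph{exactly} by a net of size $\cO(m+\ell)$ with activations in $\{\sigma,\sigma'\}$, which sidesteps your worry about approximating products and controlling the dynamic range $M$ of intermediate quantities entirely. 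For the final reciprocal and product, the paper does not use ${\rm ReLU}$ squares; it uses Assumption~\ref{as:universal_activation} directly (Lemma~\ref{lm:multi}) to approximate $x\mapsto 1/x$ on $[c/2,B_{\max}]$ and $(x,y)\mapsto xy$ on $[-M,M]^2$ by two-layer $\sigma$-nets of size $\cO(b/(a^2\delta))$ and $\cO(M^2/\delta)$ respectively---polynomial rather than polylogarithmic, but still within the stated budget. In the paper ${\rm ReLU}$ appears only in the very last line, to implement the clip $\min\{\max\{\tilde v_i,-2L/c\},2L/c\}$; there is no separate lower clamp $\wh B\vee(c/2)$ on the denominator, since on $\sB^d(R)\times[0,1]$ one already has $\wh\varphi\ge c/2$ and the outer clip handles the off-ball behavior. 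Your route would also work (and your finite-difference alternative is a clean way to avoid differentiating $\wh f$), but the paper's two lemmas remove precisely the bookkeeping you flagged.
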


We now complete the proof of Theorem~\ref{thm:expressive}. For any $R > 0$, Theorem~\ref{thm:neural_net_apx} guarantees the existence of a neural net $\wh{v} : \Reals^d \times [0,1] \to \Reals^d$ that satisfies
	\begin{align}\label{eq:apx_in_R_ball}
		\sup_{x \in \sB^d(R)}\sup_{t\in[0,1]} \left\| \wh{v}(x,\sqrt{t}) - \nabla \log Q_tf(x)\right\| \le \sqrt{\eps}
	\end{align}
	and
	\begin{align}\label{eq:global_bound}
		\max_{i \in [d]}\sup_{x \in \Reals^d} \sup_{t \in [0,1]} |\wh{v}_i(x,\sqrt{t})| \le \frac{2L}{c}.
	\end{align}
Let $\bd{\mu} \deq {\rm Law}(X_{[0,1]})$ and $\wh{\bd{\mu}} \deq {\rm Law}(\wh{X}_{[0,1]})$. The Girsanov formula gives
\begin{align*}
	D(\bd{\mu}\|\wh{\bd{\mu}}) &= \frac{1}{2}\int^1_0 \E\|b(X_t,t)-\wh{b}(X_t,t)\|^2 \dif t,
\end{align*}
where the interchange of the integral and the expectation follows from Fubini's theorem because both $b$ and $\wh{b}$ are bounded by Lemma~\ref{lm:Follmer_drift_regularity} in Appendix~\ref{app:f_regularity} and \eqref{eq:global_bound}. We now proceed to estimate the integrand. For each $t \in [0,1]$,
\begin{align*}
	&\E\|b(X_t,t)-\wh{b}(X_t,t)\|^2 \nonumber\\
	&=\E\left[\|b(X_t,t)-\wh{b}(X_t,t)\|^2\cdot\1{\{X_t \in \sB^d(R)\}}\right] + \E\left[\|b(X_t,t)-\wh{b}(X_t,t)\|^2\cdot\1{\{X_t \not\in \sB^d(R)\}}\right] \nonumber \\
	&=: T_1 + T_2,
\end{align*}
where $T_1 \le \eps$ by \eqref{eq:apx_in_R_ball}. To estimate $T_2$, we first observe that, since the F\"ollmer drift is bounded in norm by $L/c$ by Lemma~\ref{lm:Follmer_drift_regularity}, we have
\begin{align*}
	\PP\left\{ \sup_{t \in [0,1]} \|X_t\| \ge R \right\} \le \frac{\sqrt{d}+L/c}{R}
\end{align*}
\citep[Lemma~3.8]{bubeck2018LMC}. Therefore,
\begin{align*}
	T_2 &\le \frac{9dL^2}{c^2} \cdot \frac{\sqrt{d}+L/c}{R}.
\end{align*}
Choosing $R$ large enough to guarantee $T_2 \le \eps$ and putting everything together, we obtain $D(\bd{\mu}\|\wh{\bd{\mu}}) \le \eps$. Therefore, $D(\mu\|\wh{\mu}) \le D(\bd{\mu}\|\wh{\bd{\mu}}) \le \eps$ by the data processing inequality.

\section{Unbiased simulation}
\label{sec:simulation}

Now that we have shown that generative models with latent diffusions are capable of expressing a rich class of probability distributions, we turn to the problem of unbiased simulation. Specifically, given a function $g : \Reals^d \to \Reals$, we wish to estimate the expectation $\E[g(X_1)|X_0 = x]$, where $X = \{X_t\}_{t \in [0,1]}$ with $X_0 = x$ is a diffusion process of the form \eqref{eq:generative_model}. The simplest approach is to use the Euler--Maruyama scheme: Fix a partition $0 = t_0 < t_1 < \ldots < t_n < t_{n+1} = 1$ of $[0,1]$ and define the It\^o process $\{\tilde{X}_t\}_{t \in [0,1]}$ by $\tilde{X}_0 = x$ and
\begin{align}\label{eq:Euler}
	\tilde{X}_t = \tilde{X}_{t_i} + \int^t_{t_i} b(\tilde{X}_{t_i},t_i; \theta) \dif s + \int^t_{t_i} \dif W_s, \qquad t \in (t_i,t_{i+1}], i = 0,\ldots,n.
\end{align}
In particular, for each $1 \le i \le n+1$,
\begin{align*}
	\tilde{X}_{t_i} = \tilde{X}_{t_{i-1}} + b(\tilde{X}_{t_{i-1}},t_{i-1};\theta)(t_i - t_{i-1}) + W_{t_i} - W_{t_{i-1}}.
\end{align*}
We can then estimate the expectation $\E[g(X_1)]$ by $g(\tilde{X}_{t_{n+1}}) \equiv g(\tilde{X}_1)$, but this estimate is biased: if $g$ is, say, bounded, then
\begin{align*}
	|\E[g(X_1)]-\E[g(\tilde{X}_1)]| \le C_g(x) \cdot \max_{0 \le i \le n} (t_{i+1}-t_i),
\end{align*}
where $C_g(x) > 0$ is some constant that depends on $g$ and on the starting point $x$ \citep{graham2013simulation}. Recently, several authors \citep{bally2015parametrix,andersson2017unbiased,henry2017unbiased} have studied unbiased simulation of SDEs using Euler--Maruyama schemes with \textit{random} partitions, where the partition breakpoints are generated by a Poisson point process on the real line. In this section, we build on this line of work and present a scheme for unbiased simulation in the context of generative models of the form \eqref{eq:generative_model} that uses random partitions generated by arbitrary renewal processes \citep[Chap.~9]{kallenberg2002prob_book} with sufficiently well-behaved densities of interrenewal times. Our analysis closely follows that of \citet{henry2017unbiased}, but we provide a more refined analysis of the variance of the resulting estimators.

We first describe the simulation procedure. In what follows, we will drop the index $\theta$ from the drift to keep the notation clean. Let $\tau_1,\tau_2,\ldots$ be i.i.d.\ nonnegative random variables with an absolutely continuous distribution whose support contains the interval $[0,1+\eps]$ for some $\eps > 0$. Let $F_\tau$ and $f_\tau$ denote the cdf and the pdf of $\tau_1$. Let $T_0 = 0$ and 
\begin{align*}
	T_k \deq \left(\sum^k_{i=1} \tau_i\right) \wedge 1,\, k \ge 1  \qquad \text{and} \qquad N \deq \max \{ k : T_k < 1  \}.
\end{align*}
Define a process $\wh{X} = \{ \wh{X}_t \}_{t \in [0,1]}$ with $\wh{X}_0 = x$ as the Euler--Maruyama scheme \eqref{eq:Euler} on the random partition $0 = T_0 < T_1 < \ldots < T_N < T_{N+1} \equiv 1$ of $[0,1]$, and let
\begin{align}\label{eq:psi_def}
	\wh{\psi} \deq \frac{1}{1-F_\tau(1-T_N)} \cdot \left(g(\wh{X}_1)-g(\wh{X}_{T_N})\1_{\{N > 0\}}\right)   \cdot \prod^N_{k=1} \frac{1}{f_\tau(T_{k}-T_{k-1})}\wh{\cW}_k, 
\end{align}
where
\begin{align*}
	\wh{\cW}_k \deq \frac{\big(b(\wh{X}_{T_k},T_k)-b(\wh{X}_{T_{k-1}},T_{k-1})\big)^\trn\big(W_{T_{k+1}}-W_{T_k}\big)}{T_{k+1}-T_k}.
\end{align*}
This process can be interpreted as a deep generative model in the sense of \citet{rezende2014stochbackprop}, but with a \textit{random} number of layers. Specifically, let $\xi_1,\xi_2,\ldots \stackrel{{\rm i.i.d.}}{\sim} \gamma_d$ be independent of $\{\tau_i\}$, and define $\wh{X}^{(0)},\wh{X}^{(1)},\ldots,\wh{X}^{(N+1)}$ recursively by taking $\wh{X}^{(0)}=x$ and
\begin{align*}
	\wh{X}^{(k+1)} = \wh{X}^{(k)} + b(\wh{X}^{(k)},T_k) \cdot (T_{k+1}-T_k) +  (T_{k+1}-T_k)^{1/2}\xi_{k+1}, \qquad k = 0,1,\ldots,N.
\end{align*}
Then
\begin{align*}
	\wh{\psi} \stackrel{{\rm d}}{=} g(\wh{X}^{(N+1)}) \cdot \frac{1}{1-F_\tau(1-T_{N})} \cdot \prod^N_{k=1} \frac{\left(b(\wh{X}^{(k)},T_k)-b(\wh{X}^{(k-1)},T_{k-1})\right)^\trn\xi_{k+1}}{f_\tau(T_k-T_{k-1})\cdot(T_{k+1}-T_k)^{1/2}},
\end{align*}
where $\stackrel{{\rm d}}{=}$ denotes equality of probability distributions. We are now ready to state our main result on unbiased simulation (see Appendix~\ref{app:unbiased} for the proof):
\begin{theorem}\label{thm:unbiased} Suppose that the drift $b(x,t)$ is uniformly bounded, Lipschitz in $x$, and $\frac{1}{2}$-H\"older in $t$, i.e., for some constants $b_\infty > 0$ and  $L_b > 0$,
	\begin{align}\label{eq:smooth_drift}
		\|b(x,t)\| \le b_\infty \qquad\text{and}\qquad \|b(x,s)-b(y,t)\| \le L_b\left(\|x-y\| + |s-t|^{1/2}\right).
	\end{align}
for all $x,y \in \Reals^d$ and all $s,t \in [0,1]$. Suppose also that 
	\begin{align}\label{eq:tau_pdf}
		\frac{1}{f_\tau(s)} \le Ce^{as}, \qquad s \in(0,1)
	\end{align}
for some constants $C > 0$ and $a \ge 0$. Then, for any Lipschitz-continuous $g : \Reals^d \to \Reals$ with Lipschitz constant $L_g$, $\wh{\psi}$ is an unbiased estimator of $\E[g(X_1)|X_0 = x]$ with
	\begin{align}
		{\rm Var}[\wh{\psi}] \le \left(\frac{e^a}{1-F_\tau(1)}\right)^2 KM_N(\kappa),
	\end{align}
where $K = {\rm poly}(|g(x)|,L_b,L_g,b_\infty,d)$, $\kappa = \log {\rm poly}(C,L_b,L_g,b_\infty,d)$, and $M_N(\theta) \deq \E[\exp(\theta N)]$ is the moment-generating function of $N$.
\end{theorem}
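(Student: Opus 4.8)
The plan is to follow the general strategy of \citet{henry2017unbiased} in two stages: first establish unbiasedness via a telescoping/conditioning argument, then bound the variance by conditioning on the random mesh $\{T_k\}$ and the stopping index $N$. For unbiasedness, the key identity is that the Euler--Maruyama increment over a short interval $[T_{k-1},T_k]$, when reweighted by the correction factor $\wh{\cW}_k$, reproduces the \emph{exact} diffusion semigroup increment in expectation. Concretely, I would condition on $\cF_{T_k}$ and use the fact that for the true diffusion $X$, Dynkin's formula gives $\E[g(X_1)\,|\,X_{T_k}] - g(X_{T_k}) = \E[\int_{T_k}^1 (\cL_s g)(X_s)\dif s]$ where $\cL_s$ is the generator; the parametrix-type correction $\wh{\cW}_k$ is designed so that $\E[\,(\cdots)\,\wh{\cW}_k \,|\,\cF_{T_k}]$ telescopes against the Euler drift error. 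Dividing each interval's contribution by $f_\tau(T_k - T_{k-1})$ and the terminal factor by $1 - F_\tau(1-T_N)$ exactly cancels the density of the renewal mesh (this is the standard ``unbiased importance sampling over a random partition'' identity), so that $\E[\wh\psi] = \E[g(X_1)]$. I would write this as an induction on $N$, peeling off one layer at a time, using the tower property over the renewal increments; the boundedness and Lipschitz/H\"older hypotheses \eqref{eq:smooth_drift} ensure all the conditional expectations and the interchange of sum and expectation are justified (absolute integrability follows from the variance bound proved next).

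For the variance, I would bound $\E[\wh\psi^2]$ directly, since $\mathrm{Var}[\wh\psi] \le \E[\wh\psi^2]$. Condition on $N$ and the increments $\Delta_k := T_k - T_{k-1}$. The terminal difference $g(\wh X_1) - g(\wh X_{T_N})\1_{\{N>0\}}$ is controlled by $L_g$ times the size of the last Euler step, which is $\cO(\Delta_{N+1})$ in $L^2$; more importantly each $\wh{\cW}_k$ has conditional second moment that I need to estimate. Using the representation with the $\xi_k$'s, $\wh\cW_k = (b(\wh X^{(k)},T_k) - b(\wh X^{(k-1)},T_{k-1}))^\trn \xi_{k+1}/(T_{k+1}-T_k)^{1/2}$ up to the density factor, and by the $\frac12$-H\"older-in-$t$ plus Lipschitz-in-$x$ bound \eqref{eq:smooth_drift}, $\|b(\wh X^{(k)},T_k) - b(\wh X^{(k-1)},T_{k-1})\| \le L_b(\|\wh X^{(k)} - \wh X^{(k-1)}\| + \Delta_k^{1/2})$, and $\|\wh X^{(k)} - \wh X^{(k-1)}\| \le b_\infty \Delta_{k-1} + \Delta_{k-1}^{1/2}\|\xi_k\|$. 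Taking the conditional expectation over $\xi_{k+1}\sim\gamma_d$ (which is independent of everything up to time $T_k$) gives $\E[\|\wh\cW_k\|^2 \,|\, \cdots] \lesssim d\, L_b^2 (\Delta_{k-1} + \Delta_k)/\Delta_{k+1}$ roughly, and the density factor contributes $1/f_\tau(\Delta_k)^2 \le C^2 e^{2a\Delta_k} \le C^2 e^{2a}$ by \eqref{eq:tau_pdf}. The crucial point is that when I multiply these bounds across $k = 1,\ldots,N$ and integrate over the law of $(\Delta_1,\ldots,\Delta_{N+1})$ conditioned on $N$, the per-layer constants combine into a factor $\mathrm{poly}(C,L_b,L_g,b_\infty,d)^N = e^{\kappa N}$, and the remaining integral over the simplex of increments is bounded by a polynomial in the problem constants, yielding $\E[\wh\psi^2 \,|\, N] \le (e^a/(1-F_\tau(1)))^2 K e^{\kappa N}$. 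Taking the expectation over $N$ produces the moment-generating function $M_N(\kappa)$.

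The main obstacle I anticipate is the careful bookkeeping in the variance step: the factors $\Delta_k^{-1/2}$ in $\wh\cW_k$ and $1-F_\tau(1-T_N)$ in the denominator are potentially singular when increments are small, and one must verify that these singularities are integrable against the renewal density and do not blow up the per-layer constant. The hypothesis \eqref{eq:tau_pdf} that $1/f_\tau$ grows at most exponentially is exactly what tames $\prod_k 1/f_\tau(\Delta_k)$, but I still need the Gaussian expectations over the $\xi_k$'s to absorb the $\Delta_k^{-1/2}$ factors cleanly --- this is where the ``martingale weight'' structure of $\wh\cW_k$ matters, since $\E[\xi_{k+1}] = 0$ kills cross terms and one only pays $d$ per layer from $\E\|\xi_{k+1}\|^2 = d$. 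A secondary subtlety is handling the last (possibly truncated) interval $[T_N,1]$ separately, since $T_{N+1}$ is deterministically $1$ rather than $T_N + \tau_{N+1}$, so the weight $\wh\cW_N$ and the terminal factor involve the \emph{conditional} density $1-F_\tau(1-T_N)$; keeping the event $\{N>0\}$ and the $N=0$ base case straight is where off-by-one errors would creep in. Once these are handled, assembling the polynomial $K$ and the log-polynomial $\kappa$ is routine.
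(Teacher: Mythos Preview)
Your variance argument is essentially the paper's: bound $|\wh\psi|$ factor by factor, use \eqref{eq:smooth_drift} to control the drift increment by $L_b(\|\Delta^{\wh X}_k\|+\sqrt{\Delta^T_k})$, observe that conditionally the Brownian increment contributes $\sqrt{\Delta^T_{k+1}}\,\|Z\|$, so the $(\Delta^T_{k+1})^{-1}$ in $\wh\cW_k$ is exactly cancelled and each layer costs a constant depending polynomially on $C,L_b,b_\infty,d$; multiplying over $k$ and taking expectation over $N$ gives $M_N(\kappa)$. That part is fine.

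The unbiasedness sketch, however, has two genuine gaps. First, you propose to apply Dynkin's formula to $g$ and write $\E[g(X_1)\mid X_{T_k}]-g(X_{T_k})=\E\int_{T_k}^1(\cL_s g)(X_s)\dif s$, but $g$ is only Lipschitz, so $\cL_s g$ is not defined. The paper instead works with $h(x,t)\deq\E[g(X_1)\mid X_t=x]$, which is $C^{2,1}$ and solves $\partial_t h+\cL_t h=0$; Dynkin applied to $h$ along the \emph{constant-drift} Euler process $\tilde X^{t,x,v}$ yields the representation $h(x,t)=\E[g(\tilde X^{t,x,v}_1)+\int_t^1(b-v)^\trn\nabla h\,\dif s]$. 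Second, you describe the $\wh\cW_k$ as a ``parametrix-type correction that telescopes against the Euler drift error,'' but this hides the actual mechanism: the iteration propagates $\nabla h(\wh X_{T_k},T_k)$, and the weight $\Delta^W_{k+1}/\Delta^T_{k+1}$ in $\wh\cW_k$ is produced by the Gaussian integration-by-parts identity $\nabla_x\E[\varphi(x+vt+W_t)]=\E[\varphi(x+vt+W_t)\,W_t/t]$ applied to the representation of $h$. Without this step there is no way to pass from the integrand $(b-v)^\trn\nabla h$ at level $k$ to an expectation involving $\wh\cW_k$ and $\nabla h$ at level $k+1$.

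A smaller but real issue: you justify the interchange of limit and expectation by saying ``absolute integrability follows from the variance bound proved next.'' That is circular, because the variance bound is for $\wh\psi$, whereas what you need is uniform integrability of the truncated estimators $\psi_n$ (defined with $\nabla h$ in the tail term on $\{N>n\}$). The paper handles this with a separate lemma showing that $\E\big[(1-F_\tau(\Delta^T_{N+1}))^{-1}\prod_{k\le N}C/(f_\tau(\Delta^T_k)\sqrt{\Delta^T_{k+1}})\big]$ is bounded by a Mittag--Leffler function value, independently of $n$. Your induction should be on a deterministic depth $n$, not on the random $N$, and the passage $n\to\infty$ needs this uniform bound.
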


\noindent For example, the type of drift used in the construction of Section~\ref{sec:expressive} has the property \eqref{eq:smooth_drift}. The key implication of Theorem~\ref{thm:unbiased} is that the variance of the estimator $\wh{\psi}$ is controlled by the moment-generating function of $N$, and is therefore related to the tail behavior of the sums $S_k \deq \sum^k_{i=1}\tau_i$. In some cases, one can calculate $M_N$ in closed form. For instance, if we take $\tau_1,\tau_2,\ldots  \stackrel{{\rm i.i.d.}}{\sim} {\rm Exp}(\lambda)$ for some $\lambda > 0$, then the estimator \eqref{eq:psi_def} reduces to the one introduced by \citet{henry2017unbiased}. Since $F_\tau(s) = 1-e^{-\lambda s}$ and $f_\tau(s) = \lambda e^{-\lambda s}$ for $s \ge 0$,  \eqref{eq:tau_pdf} holds with $C = 1/\lambda$ and $a =\lambda$; moreover, $N \sim {\rm Pois}(\lambda)$ with
$$
M_N(\theta) = \exp\big(\lambda (e^\theta - 1)\big).
$$
Thus, ${\rm Var}[\wh{\psi}]$ grows like $\exp(d^2)$, as already observed by \cite{henry2017unbiased}. One way to reduce the variance is to choose the $\tau_i$'s with lighter tails. To see this, we need estimates of $\Lambda_N$; the following lemma provides a computable upper bound:

\begin{lemma}\label{lm:N_mgf} Let $M_\tau$ denote the moment-generating function of $\tau$. Then
	\begin{align}\label{eq:N_mgf}
	M_N(\theta) \le 1 + e^\theta\inf_{\beta > 0} \Bigg\{ (\beta+1) e^{\theta\beta} + \sum^\infty_{k=0}\big(e^{\theta+1}M_\tau(-\beta)\big)^k\Bigg\}.
	\end{align}
\end{lemma}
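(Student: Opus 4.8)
\subsection*{Proof proposal}

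The plan is to push everything back to the renewal partial sums $S_k=\sum_{i=1}^k\tau_i$ and a single Chernoff bound. The starting point is that $T_k=S_k\wedge 1$ is nondecreasing with $T_0=0$, so for every integer $k\ge 0$ one has $\{N\ge k\}=\{T_k<1\}=\{S_k<1\}$. Assuming $\theta\ge 0$ (the case $\theta<0$ is trivial, since then $e^{\theta N}\le 1$ because $N\ge 0$, while the right-hand side of \eqref{eq:N_mgf} is always at least $1$), this identity gives
\[
  M_N(\theta)=\sum_{k\ge 0}e^{\theta k}\PP\{N=k\}\le\sum_{k\ge 0}e^{\theta k}\PP\{N\ge k\}=1+\sum_{k\ge 1}e^{\theta k}\PP\{S_k<1\},
\]
using $\PP\{S_0<1\}=1$; one can also keep the exact identity $M_N(\theta)=1+(1-e^{-\theta})\sum_{k\ge 1}e^{\theta k}\PP\{S_k<1\}$ obtained by Abel summation from $\PP\{N=k\}=\PP\{S_k<1\}-\PP\{S_{k+1}<1\}$, discarding the harmless factor $1-e^{-\theta}\le 1$. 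It remains to bound $\sum_{k\ge 1}e^{\theta k}\PP\{S_k<1\}$ by $e^\theta$ times the bracketed quantity, for each fixed $\beta>0$.

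Fix $\beta>0$. Since $\tau\ge 0$ and the $\tau_i$ are i.i.d., Markov's inequality applied to $e^{-\beta S_k}$ yields $\PP\{S_k<1\}\le e^{\beta}\,\E[e^{-\beta S_k}]=e^{\beta}M_\tau(-\beta)^k$, where $M_\tau(-\beta)\in(0,1)$. This estimate is only useful once it falls below $1$, so I split the series at $m\deq\lceil\beta\rceil$, which satisfies $\beta\le m\le\beta+1$. For the head $1\le k\le m$ I use the trivial bound $\PP\{S_k<1\}\le 1$, so that $\sum_{k=1}^m e^{\theta k}\le m\,e^{m\theta}\le(\beta+1)e^{(\beta+1)\theta}=e^\theta(\beta+1)e^{\theta\beta}$, which is exactly $e^\theta$ times the first bracketed term. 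For the tail $k\ge m+1$ one has $k-1\ge\beta$, hence $e^{\beta-(k-1)}\le 1$ and therefore $e^{\beta}M_\tau(-\beta)^k=e^{\beta-(k-1)}(eM_\tau(-\beta))^{k-1}\le (eM_\tau(-\beta))^{k-1}$; multiplying by $e^{\theta k}$ and reindexing $j=k-1\ge m$ gives $e^{\theta k}\PP\{S_k<1\}\le e^\theta\,(e^{\theta+1}M_\tau(-\beta))^{j}$, so that $\sum_{k\ge m+1}e^{\theta k}\PP\{S_k<1\}\le e^\theta\sum_{k\ge 0}(e^{\theta+1}M_\tau(-\beta))^{k}$. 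Adding the head and tail contributions and taking the infimum over $\beta>0$ produces \eqref{eq:N_mgf} exactly.

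I do not expect a genuine obstacle here; the one point that needs care is the choice of the split index $m$. It must be large enough (at least $\beta$) that for $k>m$ the extraneous factor $e^{\beta}$ in the Chernoff estimate is absorbed into $(eM_\tau(-\beta))^{k-1}$, turning the tail into an honest geometric series of ratio $e^{\theta+1}M_\tau(-\beta)$; yet small enough (at most $\beta+1$) that the crudely bounded head contributes only $(\beta+1)e^{\theta\beta}$. The value $m=\lceil\beta\rceil$ threads this needle and is precisely what yields the constants in the stated form. Note also that the bound is finite for a given $\beta$ exactly when $e^{\theta+1}M_\tau(-\beta)<1$, which is why the estimate is informative only when $\tau$ has tails light enough that $M_\tau(-\beta)$ can be made small --- the mechanism exploited in the subsequent discussion of variance reduction.
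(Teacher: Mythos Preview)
Your proof is correct and follows essentially the same route as the paper: express $M_N(\theta)$ in terms of the tail probabilities $\PP\{S_k<1\}$, split the resulting sum near $k\approx\beta$, bound the head trivially and the tail by a Chernoff/Markov estimate. The paper cites a bound of \citet{glynn1994counting} for the first step and then applies Markov to $\PP[S_k<k/\beta]$ (after using $1<k/\beta$ for $k>\beta$), whereas you derive the starting inequality directly from $\{N\ge k\}=\{S_k<1\}$ and apply Markov straight to $\PP\{S_k<1\}$, absorbing the resulting $e^\beta$ factor afterwards; these are minor variations of the same argument. One small slip: your displayed identity $e^{\beta}M_\tau(-\beta)^k=e^{\beta-(k-1)}(eM_\tau(-\beta))^{k-1}$ is off by a factor of $M_\tau(-\beta)$ (the right-hand side equals $e^\beta M_\tau(-\beta)^{k-1}$), but since $M_\tau(-\beta)\le 1$ the inequality $e^{\beta}M_\tau(-\beta)^k\le(eM_\tau(-\beta))^{k-1}$ and hence the final bound remain valid.
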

As an example, suppose $\tau_1,\tau_2,\ldots$ are i.i.d.\ samples from the uniform distribution on $[0,T]$ for some $T > 1$. Then
$$
M_\tau(-\beta) = \frac{1}{\beta T}(1-e^{-\beta T}),
$$
and it is a matter of straightforward but lengthy algebra to show that $M_\tau(-\beta) \le e^{-2(\theta+1)}$ for all $\beta$ satisfying
\begin{align*}
	e^{-\beta T} \ge 2\left(1+e^{-2(\theta+1)}\log 2 - (2\theta+3)e^{-2(\theta+1)}\right).
\end{align*}
Using this in \eqref{eq:N_mgf} yields the estimate $M_N(\theta) \lesssim e^{{\rm poly}(\theta)}$. The density of a ${\rm Uniform}(0,T)$ random variable clearly satisfies \eqref{eq:tau_pdf}. Thus, applying Theorem~\ref{thm:unbiased} to the estimator \eqref{eq:psi_def} with $\tau_i \stackrel{{\rm i.i.d.}}{\sim} {\rm Uniform}(0,T)$, we see that its variance scales \textit{quasipolynomially} in $d$, i.e., ${\rm Var}[\wh{\psi}] \lesssim e^{{\rm polylog}(d)}$. However, choosing $\tau_i$'s with lighter tails will generally lead to larger values of $N$, i.e., a deeper generative model will be needed.

\newpage

\appendix

\section{The proof of Theorem~\ref{thm:log_g_cost}}
\label{app:control}

We first need some background on controlled diffusion processes, see, e.g., \cite{fleming1975control}. As in Section~\ref{sec:control}, let $\cU$ be the set of controls, where each $u \in \cU$ defines a controlled diffusion governed by the It\^o SDE
\begin{align*}
	\dif X^u_t = \left(b(X^u_t,t) + u(X^u_t,t) \right)\dif t + \dif W_t, \qquad t \in [0,1];\, X^u_0 = x_0.
\end{align*}
Let $c : \Reals^d \times \Reals^d \to \Reals_+$ and $\tilde{c} : \Reals^d \to \Reals_+$ be given. For each $u \in \cU$, we define the cost-to-go functions
\begin{align}\label{eq:cost_to_go}
	J^u(x,t) \deq \E\Bigg[ \int^1_t c(X^u_s,u_s)\dif s + \tilde{c}(X^u_1)\Bigg|X^u_t = x\Bigg], \qquad x \in \Reals^d, t \in [0,1]
\end{align}
where $u_s$ is shorthand for $u(X^u_s,s)$. The value functions $v : \Reals^d \times [0,1] \to \Reals_+$ are defined in \eqref{eq:value_function}. In general, finding an optimal control is difficult. However, a sufficient condition for optimality is given by the so-called \textit{verification theorem} from the theory of controlled diffusions (see, e.g., Chap.~VI of \citet{fleming1975control}): Suppose that there exists a function $v \in C^{2,1}(\Reals^d \times [0,1])$ that solves the Cauchy problem
\begin{align}\label{eq:Bellman}
	\frac{\partial v(x,t)}{\partial t} + \cL_t v(x,t) = - \min_{\alpha \in \Reals^d} \left\{ \alpha^\trn \nabla v(x,t) + c(x,\alpha)\right\} \text{ on } \Reals^d \times [0,1]; \qquad g(\cdot,1) = \tilde{c}(\cdot)
\end{align}
where $\cL_t$ is the (time-varying) generator of the diffusion \eqref{eq:generic_diffusion}:
\begin{align}\label{eq:generator}
	\cL_t h(x,t) \deq b(x,t)^\trn \nabla h(x,t) + \frac{1}{2} \tr \nabla^2 h(x,t)
\end{align}
for any $h \in C^{2,1}(\Reals^d \times [0,1])$, and where the gradient and the Hessian are taken with respect to the `space variable' $x \in \Reals^d$. Then $v$ is the value function for \eqref{eq:cost_to_go}, and the optimal control $u^*$ is given by
\begin{align}\label{eq:optimal_control}
	u^*(x,t) = \argmin_{\alpha \in \Reals^d} \left\{ \alpha^\trn \nabla v(x,t) + c(x,\alpha)\right\}.
\end{align}
The PDE \eqref{eq:Bellman} is called the \textit{Bellman equation} associated to the control problem \eqref{eq:cost_to_go}.

\begin{remark} {\em In fact, the control \eqref{eq:optimal_control} is optimal among a much wider class of \textit{adapted controls}, i.e., all stochastic processes $\{u_t\}_{t \in [0,1]}$ adapted to the filtration $\{\cF_t\}$. The class $\cU$ defined above consists of so-called \textit{Markov controls}, where $u_t$ is a deterministic function of $X^u_t$ and $t$. In that case, the controlled diffusion $X^u$ is a Markov process.}
\end{remark}

We now turn to the proof of Theorem~\ref{thm:log_g_cost}. The first step is to use the logarithmic transformation due to \cite{fleming1978exit}; see also \cite{fleming1985fundamental,sheu1991transition}. Consider the function $h(x,t) \deq \E[g(X_1)|X_t = x]$. By the Feynman--Kac formula \citep[Thm.~24.1]{kallenberg2002prob_book}, this function is a $C^{2,1}$ solution of the Cauchy problem
\begin{align}\label{eq:FK}
	\frac{\partial h}{\partial t} + \cL_t h = 0 \text{ on } \Reals^d \times [0,1]; \qquad h(\cdot,1) = g(\cdot).
\end{align}
It is a matter of simple calculus to verify that $v(x,t) = -\log h(x,t)$ solves the Cauchy problem
\begin{align}\label{eq:HJB}
	\frac{\partial v}{\partial t} + \cL_t v = \frac{1}{2}\|\nabla v\|^2 \text{ on } \Reals^d \times [0,1]; \qquad v(\cdot,1) = - \log g(\cdot).
\end{align}
Moreover, using the variational representation
\begin{align*}
	\frac{1}{2}\|\nabla v\|^2 = -\min_{\alpha \in \Reals^d} \left\{\alpha^\trn \nabla v + \frac{1}{2}\|\alpha\|^2\right\},
	\end{align*}
	where the optimizer is given by $\alpha^* = - \nabla v$, it is readily verified that \eqref{eq:HJB} is the Bellman equation \eqref{eq:Bellman} associated to the control problem \eqref{eq:log_g_cost}. Hence, by the verification theorem, $v(x,t) = -\log h(x,t)$ is the value function we seek, and the optimal control is given by $u^*(x,t) = -\nabla v(x,t)$. 
	
	Now consider the diffusion process
	\begin{align*}
		\dif X^*_t = \big(b(X^*_t,t)+\nabla \log h(X^*_t,t)\big)\dif t + \dif W_t,
	\end{align*}
	which satisfies
	\begin{align*}
		-\log\E[g(X_1)|X_t = x] &= \E\Bigg[\frac{1}{2}\int^1_T \|\nabla \log h(X^*_s,s)\|^2 \dif s - \log g(X^*_1)\Bigg|X^*_t = x \Bigg] \\
		&= \min_{u \in \cU} \E\Bigg[\frac{1}{2}\int^1_t \|u_s\|^2\dif s - \log g(X^u_1)\Bigg|X^u_t = x\Bigg].
	\end{align*}
Since $h$ solves \eqref{eq:FK}, the transition density of $\{X^*_t\}$ is given by \eqref{eq:optimal_transition_density} by a result of \citet{jamison1975markov} and \citet{daipra1991reciprocal}.

\section{Regularity properties of $f$ and the F\"ollmer drift}
\label{app:f_regularity}

We first show that Assumption~\ref{as:f} holds for Gibbs measures
\begin{align*}
	\mu(\dif x) = Z^{-1}e^{-\frac{1}{2}\|x\|^2-F(x)}\dif x
\end{align*}
with sufficiently well-behaved potentials $F$. Suppose that $F : \Reals^d \to \Reals_+$ is differentiable, and both $F$ and $\nabla F$ are $L$-Lipschitz. Then $f = \dif\mu/\dif\gamma_d = {\rm const} \cdot e^{-F}$, and the Lipschitz continuity of $f$ follows from the Lipschitz continuity of $u \mapsto e^{-u}$ on $[0,\infty)$:
\begin{align*}
	|e^{-F(x)}-e^{-F(y)}| \le |F(x)-F(y)| \le L\|x-y\|.
\end{align*}
Likewise, the Lipschitz continuity of $\nabla f$ follows from the Lipschitz continuity of $\nabla F$: since $\nabla e^{-F} = -e^{-F}\nabla F$, we have
\begin{align*}
	\|\nabla e^{-F(x)} - \nabla e^{-F(y)} \| &\le e^{-F(x)} \|\nabla F(x)-\nabla F(y)\| + \|\nabla F(y)\| |e^{-F(x)}-e^{-F(y)}| \nonumber\\
	&\le (L+L^2)\|x-y\|.
\end{align*}
Finally, suppose that $F$ is also bounded from above, $F \le a$ for some $a > 0$. Then $f \ge c$ everywhere, where $0 < c \le 1$ because both $\mu$ and $\gamma_d$ are probability measures.

We will also need the following simple lemma:

\begin{lemma}[Regularity of the F\"ollmer drift]\label{lm:Follmer_drift_regularity} Under Assumption~\ref{as:f}, the F\"ollmer drift $b(x,t) = \nabla \log Q_{1-t}f(x)$ is bounded in norm by $L/c$ and is Lipschitz with Lipschitz constant $L/c + L^2/c^2$, where $L$ is the maximum of the Lipschitz constants of $f$ and $\nabla f$.
\end{lemma}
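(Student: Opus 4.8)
The plan is to work directly with the explicit formula $b(x,t) = \nabla \log Q_{1-t} f(x) = \dfrac{\nabla Q_{1-t} f(x)}{Q_{1-t} f(x)}$, using the two facts $f \ge c$ and the Lipschitz bounds on $f$ and $\nabla f$, together with the smoothing properties of the heat semigroup $Q_t$. The key observation is that $\nabla$ commutes with $Q_s$: since $Q_s f(x) = \E[f(x+\sqrt{s}Z)]$ with $Z\sim\gamma_d$, dominated convergence gives $\nabla Q_s f(x) = \E[\nabla f(x+\sqrt{s}Z)] = Q_s(\nabla f)(x)$, and likewise $Q_s$ preserves the $L$-Lipschitz property of any function (averaging Lipschitz functions keeps the same constant). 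Also $Q_s f \ge c$ pointwise because $f \ge c$ and $Q_s$ is an averaging operator, and for the same reason $Q_s f$ and $Q_s(\nabla f) = \nabla Q_s f$ inherit the Lipschitz constant $L$.

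\textbf{Boundedness.} For the norm bound, abbreviate $s = 1-t \in [0,1]$. Then
\begin{align*}
\|b(x,t)\| = \frac{\|\nabla Q_s f(x)\|}{Q_s f(x)} = \frac{\|Q_s(\nabla f)(x)\|}{Q_s f(x)} \le \frac{Q_s(\|\nabla f\|)(x)}{c} \le \frac{L}{c},
\end{align*}
where the last step uses that $\|\nabla f\| \le L$ everywhere (a consequence of $f$ being $L$-Lipschitz), hence $Q_s(\|\nabla f\|)(x) \le L$.

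\textbf{Lipschitz continuity.} For the Lipschitz bound, fix $s = 1-t$ and write $b(x,t) = \dfrac{g_1(x)}{g_0(x)}$ with $g_1 = \nabla Q_s f = Q_s(\nabla f)$ and $g_0 = Q_s f$. I would use the elementary bound for a ratio: for points $x, y$,
\begin{align*}
\left\| \frac{g_1(x)}{g_0(x)} - \frac{g_1(y)}{g_0(y)} \right\|
\le \frac{\|g_1(x) - g_1(y)\|}{g_0(x)} + \|g_1(y)\| \cdot \frac{|g_0(x) - g_0(y)|}{g_0(x) g_0(y)}.
\end{align*}
Now $g_1 = Q_s(\nabla f)$ is $L$-Lipschitz (since $\nabla f$ is $L$-Lipschitz and $Q_s$ preserves this), so $\|g_1(x) - g_1(y)\| \le L\|x-y\|$; $\|g_1(y)\| \le L$ as above; $g_0 = Q_s f$ is $L$-Lipschitz, so $|g_0(x) - g_0(y)| \le L\|x-y\|$; and $g_0 \ge c$ everywhere. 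Substituting,
\begin{align*}
\|b(x,t) - b(y,t)\| \le \frac{L\|x-y\|}{c} + L \cdot \frac{L\|x-y\|}{c^2} = \left(\frac{L}{c} + \frac{L^2}{c^2}\right)\|x-y\|,
\end{align*}
which is the claimed constant.

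\textbf{Main obstacle.} There is no serious analytic obstacle here; the only points requiring a little care are the justification that $\nabla$ passes under the expectation defining $Q_s$ (handled by dominated convergence using $\|\nabla f\| \le L$) and that the heat semigroup preserves Lipschitz constants and lower bounds (both immediate from its representation as an average over Gaussian shifts). One should also note that at $t = 1$, i.e.\ $s = 0$, we have $Q_0 = \mathrm{id}$ and the formula reads $b(x,1) = \nabla f(x)/f(x)$, to which the same bounds apply directly, so the estimates are uniform over all $t \in [0,1]$.
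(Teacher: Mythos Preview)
Your proposal is correct and follows essentially the same approach as the paper's proof: both use that $\nabla$ commutes with $Q_s$, that $Q_s$ preserves the lower bound $c$ and the Lipschitz constant $L$, and then bound the ratio $\nabla Q_s f / Q_s f$ via the same add-and-subtract decomposition (the paper writes the second term as $\|b(x,t)\|\cdot|Q_{1-t}f(x)-Q_{1-t}f(x')|/Q_{1-t}f(x')$, which is algebraically the same as yours). The only cosmetic difference is that the paper cites a reference for the commutation $\partial_i Q_t f = Q_t \partial_i f$, whereas you justify it directly by dominated convergence.
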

\begin{proof} The heat semigroup $Q_t f(x) = \E[f(x+\sqrt{t}Z)]$, $Z \sim \gamma_d$, commutes with the gradient operator: for any differentiable and Lipschitz $f : \Reals^d \to \Reals$, $\partial_{i}Q_tf = Q_t\partial_{i}f$ for all $i \in [d]$ \citep[Corollary~2.2.8]{stroock2008PDEs}. Therefore, since $f(x) \ge c$ and $\|\nabla f(x)\| \le L$ for all $x$, we have $Q_tf(x)\ge c$ and $\|\nabla Q_tf(x)\| \le L$ for all $x \in \Reals^d$ and all $t \ge 0$. Consequently, for any $x \in \Reals^d$ and $t \in [0,1]$
\begin{align*}
	\|b(x,t)\| &= \left\|\frac{\nabla Q_{1-t} f(x)}{Q_{1-t} f(x)}\right\| \le \frac{L}{c}.
\end{align*}
Also, since $\nabla f$ is Lipschitz, $\|\nabla Q_tf(x)-\nabla Q_tf(x')\| \le L\|x-x'\|$ for any $x,x' \in \Reals^d$ and $t \in [0,1]$, and thus
\begin{align*}
	\|b(x,t)-b(x',t)\| &= \left\|\frac{\nabla Q_{1-t}f(x)}{Q_{1-t}f(x)} - \frac{\nabla Q_{1-t}f(x')}{Q_{1-t}f(x')}\right\| \\
	&\le \frac{\|\nabla Q_{1-t}f(x)-\nabla Q_{1-t}f(x')\|}{Q_{1-t}f(x')} + \|b(x,t)\| \cdot \frac{|Q_{1-t}f(x)-Q_{1-t}f(x')|}{Q_{1-t}f(x')} \\
	&\le \left(\frac{L}{c}+\frac{L^2}{c^2}\right)\|x-x'\|,
\end{align*}
and the proof is complete.
\end{proof}

\section{Uniform approximation of the heat semigroup by a finite sum}
\label{app:SAA_proof}

In this appendix, we prove the following result, which is used in the proof of Theorem~\ref{thm:neural_net_apx}:

\begin{theorem}\label{thm:SAA} For any $\eps > 0$ and any $R > 0$, there exist $N = {\rm poly}(1/\eps,d,L,R)$ points $z_1,\ldots,z_N \in \Reals^d$, for which the following holds:
\begin{subequations}
\begin{align*}
 \max_{n \le N}\|z_n\| \le 8\sqrt{(d+6)\log N} &\\
 \sup_{x \in \sB^d(R)} \sup_{t \in [0,1]} \left|\frac{1}{N}\sum^N_{n=1}f(x+\sqrt{t}z_n) - Q_t f(x)\right| \le \eps & \\
\sup_{x \in \sB^d(R)} \sup_{t \in [0,1]} \left\|\frac{1}{N}\sum^N_{n=1}\nabla f(x+\sqrt{t}z_n) - \nabla Q_t f(x)\right\| \le \eps &
\end{align*}
\end{subequations}
\end{theorem}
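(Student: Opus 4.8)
Here is how I would prove Theorem~\ref{thm:SAA}.

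\textbf{Setup and the norm bound.} The plan is a probabilistic-method argument: draw $z_1,\dots,z_N$ i.i.d.\ from $\gamma_d$ and show that, for $N$ a suitable polynomial in $1/\eps,d,L,R$, all three displayed inequalities hold simultaneously with positive probability; any realization in that event furnishes the required points. The first inequality is a Gaussian norm tail bound together with a union bound: since $z\mapsto\|z\|$ is $1$-Lipschitz and $\E\|Z\|\le\sqrt d$ for $Z\sim\gamma_d$, Gaussian concentration gives $\PP\{\|Z\|\ge\sqrt d+u\}\le e^{-u^2/2}$, hence $\PP\{\max_{n\le N}\|z_n\|\ge\sqrt d+\sqrt{2\log(4N)}\}\le 1/4$, and $\sqrt d+\sqrt{2\log(4N)}\le 8\sqrt{(d+6)\log N}$ once $N\ge 3$. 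Write $E$ for this event and set $B\deq 8\sqrt{(d+6)\log N}$.

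\textbf{The two uniform laws of large numbers.} Reparametrize $t=s^2$ with $s\in[0,1]$ and regard each of the remaining inequalities as a uniform law of large numbers over $\Theta\deq\sB^d(R)\times[0,1]$: indeed $\frac1N\sum_{n\le N}f(x+sz_n)$ is unbiased for $Q_{s^2}f(x)$, and $\frac1N\sum_{n\le N}\nabla f(x+sz_n)$ is unbiased for $\nabla Q_{s^2}f(x)=Q_{s^2}\nabla f(x)$, the last identity being the commutation of the heat semigroup with the gradient used in the proof of Lemma~\ref{lm:Follmer_drift_regularity}. Two features tame the relevant function classes: although $f$ itself need not be bounded, $z\mapsto f(x+sz)$ is $L$-Lipschitz for $s\le 1$, so its law under $\gamma_d$ is $L^2$-subgaussian about its mean and an average of $N$ i.i.d.\ copies concentrates at scale $L/\sqrt N$; and $\|\nabla f\|\le L$ everywhere by Assumption~\ref{as:f}, so the gradient summands are bounded by $L$ and also $L$-Lipschitz in $z$. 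I would then take a $\delta$-net of $\Theta$ of cardinality at most $(\mathrm{poly}(R,L,d)/\delta)^{d+1}$, union-bound the above (sub)Gaussian deviation at scale $\eps/2$ over the net points, and control the supremum off the net by the oscillation estimate
\begin{align*}
\left|\frac{1}{N}\sum_{n\le N}f(x+sz_n)-\frac{1}{N}\sum_{n\le N}f(x'+s'z_n)\right|\le L\delta\left(1+\frac{1}{N}\sum_{n\le N}\|z_n\|\right)\le L\delta(1+B)
\end{align*}
valid on $E$ whenever $\|x-x'\|\le\delta$ and $|s-s'|\le\delta$, together with the matching population bound $|Q_{s^2}f(x)-Q_{s'^2}f(x')|\le L\delta(1+\sqrt d)$. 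Choosing $\delta$ of order $\eps/(LB)$ makes the oscillation contributions at most $\eps/2$, and the net-point deviations then require $N\gtrsim\frac{L^2}{\eps^2}\big(d\log\frac{RLB}{\eps}+\log\frac1\delta\big)$; since $B=\cO(\sqrt{d\log N})$ enters only through a logarithm, this self-referential constraint closes to a clean $N=\mathrm{poly}(1/\eps,d,L,R)$ by monotonicity. The gradient inequality is proved in exactly the same way, coordinatewise, replacing the subgaussianity of the summands by their boundedness.

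\textbf{Assembling, and the main obstacle.} Fixing constants so that the probabilities of $E^c$ and of the failure of the second (resp.\ third) inequality are each at most $1/4$ leaves positive probability that all three hold at once, which proves existence. The main obstacle is not any single estimate but the combination of two difficulties: coping with the unboundedness of $f$, which is precisely why one must use Gaussian/Lipschitz concentration rather than Hoeffding at the net points (and must avoid bounding $f$ on a compact set by a constant that could hide $|f(0)|$), and securing \emph{joint} uniformity over $x$ and $t$, where the $\|z_n\|\le B=\cO(\sqrt{d\log N})$ governing the modulus of continuity in $s$ feeds back into the sample size; tracking these so that $N$ stays polynomial is the crux. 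One can equivalently phrase the uniform step via symmetrization and Dudley's entropy integral, bounding $\E\sup$ of the empirical process and then applying Borell--TIS for the tail — this is what "empirical process methods" refers to — but the $\delta$-net argument above is essentially a one-step version of that chaining bound.
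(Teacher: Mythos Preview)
Your proposal is correct and complete; the probabilistic-method shell, the Gaussian-concentration argument for $\max_n\|z_n\|$, the pointwise subgaussian bound at net points via the $L$-Lipschitz property of $z\mapsto f(x+sz)$, and the oscillation control through $\max_n\|z_n\|\le B$ all go through, and the self-referential dependence on $B=\cO(\sqrt{d\log N})$ does close to a polynomial.

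The paper takes a genuinely different route to the same destination. Rather than discretize $\Theta$ and union-bound, it packages the uniform bound as an empirical-process supremum over the class $\cG=\{z\mapsto f(x+\sqrt{t}z)-f(0):x\in\sB^d(R),\,t\in[0,1]\}$ with envelope $F(z)=L((R\vee 1)+\|z\|)$, bounds the Koltchinskii--Pollard entropy integral $J(\cG,F)$ via the Lipschitz parametrization, controls $\E\|P_N-P\|_\cG$ by a Dudley-type bound, and then invokes Adamczak's Talagrand-type concentration inequality for unbounded envelopes to pass from expectation to high probability. Your approach is more elementary and self-contained: it avoids Adamczak's inequality entirely by exploiting Gaussian Lipschitz concentration directly at each net point and by conditioning on the norm event $E$ to make the oscillation bound deterministic; the cost is a slightly more explicit bookkeeping of net sizes and the mild circularity in $B$. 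The paper's route is cleaner once one has the empirical-process machinery on the shelf and treats the unboundedness of $f$ uniformly through the envelope, whereas your route makes the argument accessible with nothing beyond one-step chaining and the Gaussian concentration inequality --- and, as you note at the end, the two are really the same chaining argument expressed at different levels of abstraction.
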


We gather some preliminaries first. We recall the definition of the Orlicz exponential norm of order $2$ \citep[Sec.~2.3]{Gine_Nickl}: for a real-valued random variable $U$,
\begin{align*}
	\|U\|_{\psi_2} \deq \inf\left\{ c > 0 : \E \exp\left(\frac{|U|}{c}\right)^2 \le 2\right\}.
\end{align*}
The $\psi_2$ norm dominates the $L^2$ norm $\|U\|_2 \deq (\E|U|^2)^{1/2}$: $\|U\|_2 \le \|U\|_{\psi_2}$. A simple application of Markov's inequality leads to the following tail bound:
	\begin{align}\label{eq:psi2_tail_bound}
		\PP\left[|U| \ge t\|U\|_{\psi_2}\right] \le \frac{1}{e^{t^2}-1}.
	\end{align}

\begin{lemma}\label{lm:Gaussian_Orlicz_norm} Let $U = \|Z\|$, where $Z \sim \gamma_d$. Then $\|U\|_{\psi_2} \le \sqrt{d} + \sqrt{6}$.
\end{lemma}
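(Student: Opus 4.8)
The plan is to reduce the statement to a one-dimensional estimate on the Laplace transform of a chi-squared variable. Writing $Z=(Z_1,\dots,Z_d)$ with i.i.d.\ standard Gaussian coordinates, $U^2=\|Z\|^2=\sum_{i=1}^d Z_i^2$ has a $\chi^2_d$ distribution, whose moment generating function is $\E[e^{\lambda U^2}]=(1-2\lambda)^{-d/2}$ for $\lambda<\tfrac12$. By the definition of the Orlicz norm, it then suffices to exhibit a constant $c>0$ with $c^2>2$ (so the relevant integral converges) such that $\E[e^{U^2/c^2}]=(1-2/c^2)^{-d/2}\le 2$; any such $c$ is an upper bound on $\|U\|_{\psi_2}$.

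What remains is to check this inequality at $c=\sqrt d+\sqrt 6$. Taking logarithms, $(1-2/c^2)^{-d/2}\le 2$ is equivalent to $-\tfrac d2\log(1-2/c^2)\le\log 2$. I would control the left-hand side with an elementary bound on $-\log(1-x)$ for $x\in[0,1)$ (for instance $-\log(1-x)\le x/(1-x)$, or a second-order Taylor estimate keeping the $x^2$ term so as to exploit the cross term $2\sqrt{6d}$ in $c^2=d+2\sqrt{6d}+6$), which reduces the claim to a scalar inequality in $d$ that can be verified directly. Once the chi-squared moment generating function has been written down, this last step is routine.

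An alternative, more conceptual route uses Gaussian concentration. Since $U=\|Z\|$ is a $1$-Lipschitz function of $Z$, the Gaussian concentration inequality gives $\PP[|U-\E U|\ge t]\le 2e^{-t^2/2}$; integrating $\frac{2t}{c^2}e^{t^2/c^2}$ against this tail (for $c^2>2$) yields $\E[e^{(U-\E U)^2/c^2}]\le 1+\frac{4}{c^2-2}$, hence $\|U-\E U\|_{\psi_2}\le\sqrt 6$, and then $\|U\|_{\psi_2}\le\|U-\E U\|_{\psi_2}+\|\E U\|_{\psi_2}$ together with the Jensen bound $\E U\le(\E U^2)^{1/2}=\sqrt d$ produces a bound of the same shape. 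In either approach there is no real conceptual obstacle; the only delicate point is bookkeeping the absolute constants tightly enough to land at the stated bound $\sqrt d+\sqrt 6$.
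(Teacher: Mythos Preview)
Your primary approach---checking $(1-2/c^2)^{-d/2}\le 2$ at $c=\sqrt d+\sqrt 6$---does not work, because this inequality is false for large $d$. With $c^2=d+2\sqrt{6d}+6$ one has $2/c^2\sim 2/d$, and $(1-2/d)^{-d/2}\to e>2$ as $d\to\infty$; a direct numerical check shows the inequality already fails around $d\approx 150$. The sharpest bound the chi-squared moment generating function can deliver is $\|U\|_{\psi_2}\le\sqrt{d/\log 2}$, which carries the wrong leading constant. No refinement of the Taylor estimate on $-\log(1-x)$ helps, since the target inequality itself is false, not merely hard to verify.

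Your alternative route via Gaussian concentration is exactly the paper's proof: the paper invokes $\PP[|U-\E U|\ge t]\le 2e^{-t^2/2}$ from the $1$-Lipschitz property of $z\mapsto\|z\|$, cites the resulting bound $\|U-\E U\|_{\psi_2}\le\sqrt 6$, and combines it with $\E U\le\sqrt d$ via the triangle inequality. Your explicit tail-integration computation giving $1+4/(c^2-2)\le 2$ at $c^2=6$ fills in the step the paper merely cites. Your closing caveat about ``bookkeeping the absolute constants'' is well placed: with the paper's definition of $\|\cdot\|_{\psi_2}$, a constant $a$ has $\psi_2$ norm $|a|/\sqrt{\log 2}$ rather than $|a|$, so the triangle-inequality step as literally written yields $\sqrt{d}/\sqrt{\log 2}+\sqrt 6$; the paper glosses over this in the same way.
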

\begin{proof} If $F : \Reals^d \to \Reals$ is $1$-Lipschitz, then the centered random variable $\xi = F(Z) - \E F(Z)$ has subgaussian tails \citep[Theorem~5.6]{Boucheron_etal_book}:
	\begin{align*}
		\PP\left\{|\xi| \ge t\right\} \le 2e^{-t^2/2} \text{ for all $t > 0$}.
	\end{align*}
This implies that $\|\xi\|_{\psi_2} \le \sqrt{6}$ \citep[Eq.~(2.25)]{Gine_Nickl}. Taking $F(Z)=U$ and using the triangle inequality, we obtain
\begin{align*}
	\|U\|_{\psi_2} &\le \E U + \|U-\E U\|_{\psi_2} \le \sqrt{d} + \sqrt{6},
\end{align*}
where $\E U \le \| U \|_2 = \sqrt{d}$ by Jensen's inequality.
\end{proof}
\noindent Let $U_1,\ldots,U_N$, $N \ge 2$, be a collection of (possibly dependent) random variables with finite $\psi_2$ norms. Then we have the following maximal inequality:
\begin{align}\label{eq:maxineq}
	\left\| \max_{j \le N}|U_j|\right\|_{\psi_2} \le 4\sqrt{\log N} \max_{j \le N} \|U_j\|_{\psi_2},
\end{align}
(Lemma~2.3.3 in \citet{Gine_Nickl}). 

We also need some results on suprema of empirical processes. Let $\cG$ be a class of real-valued functions on some measurable space $\sZ$. We say that a positive function $F : \sZ \to \Reals_+$ is an \textit{envelope} of $\cG$ if $|g(z)| \le F(z)$ for all $g \in \cG$ and $z \in \sZ$. Let $Z_1,\ldots,Z_N$ be i.i.d.\ random elements of $\sZ$ with probability law $P$ and denote by $P_N$ the corresponding empirical distribution, i.e., $P_N(A) = N^{-1}\sum_{n \le N} \1_{\{Z_n \in A\}}$ for all measurable sets $A \subset \sZ$. We will use the linear functional notation for expectations, i.e., $Pg \deq \E_P[g(Z)]$ and $P_Ng \deq \E_{P_N}[g(Z)] = N^{-1}\sum_{n \le N}g(Z_n)$. We are interested in the quantity
\begin{align*}
	\|P_N - P\|_\cG \deq \sup_{g \in \cG} |P_Ng - Pg|,
\end{align*}
which is a random variable under standard regularity assumptions on $\cG$, such as separability. The expected supremum $\E \|P_N - P\|_\cG$ is controlled by the covering numbers of $\cG$. The $L^2(Q)$ covering numbers of $\cG$ with respect to a probability measure $Q$ on $\sZ$ are defined by
\begin{align*}
	 N(\cG,L^2(Q),\eps) &\deq \min \Big\{ K \, : \, \text{ there exist $f_1,\ldots,f_K \in L^2(Q)$} \nonumber\\
	& \qquad \qquad \qquad \text{such that $\sup_{g \in \cG}\min_{k \le K} \|g-f_k\|_{L^2(P)} \le \eps$}\Big\}.
\end{align*}
The \textit{Koltchinskii--Pollard $\eps$-entropy} of $\cG$ is given by
\begin{align*}
	H(\cG,F,\eps) \deq \sup_{Q} \sqrt{\log 2N(\cG,L^2(Q),\eps \|F\|_{L^2(Q)})},
\end{align*}
where the supremum is over all probability measures $Q$ supported on finitely many points of $\sZ$. Then we have the following bound on the expectation of $\|P_N - P\|_\cG$ (Theorem~3.54 and Eq.~(3.177) in \citet{Gine_Nickl}):
\begin{lemma}\label{lm:expect_sup} Let $\cG$ be a class of functions containing $0$, such that
	\begin{align*}
		J(\cG,F) \deq \int^\infty_0 H(\cG,F,\eps) \dif\eps < \infty.
	\end{align*}
Let $Z_1,\ldots,Z_N$ be i.i.d.\ copies of a random element $Z$ of $\sZ$ with probability law $P$, such that $F \in L^2(P)$. Then
\begin{align*}
	\E\|P_N - P\|_\cG \le \frac{8\sqrt{2}J(\cG,F)\|F\|_{L^2(P)}}{\sqrt{N}}.
\end{align*}
\end{lemma}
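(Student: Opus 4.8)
The plan is to prove Lemma~\ref{lm:expect_sup} by the classical two-step route: \emph{symmetrization} followed by a \emph{chaining} (Dudley entropy integral) bound, which is precisely the development leading to Theorem~3.54 of \citet{Gine_Nickl}. First I would symmetrize: introducing i.i.d.\ Rademacher signs $r_1,\ldots,r_N$ independent of $Z_1,\ldots,Z_N$, the standard symmetrization inequality gives
\[
\E\|P_N - P\|_\cG \le 2\,\E\Bigg[\sup_{g \in \cG}\Big|\frac{1}{N}\sum^N_{i=1} r_i\, g(Z_i)\Big|\Bigg],
\]
so it suffices to control the Rademacher process $g \mapsto \frac{1}{N}\sum_i r_i g(Z_i)$.

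Next I would condition on $Z_1,\ldots,Z_N$. Conditionally, by Hoeffding's inequality for Rademacher sums, this process has sub-Gaussian increments with variance proxy $\frac{1}{N}\|g-h\|^2_{L^2(P_N)}$, i.e.\ it is sub-Gaussian with respect to the random pseudometric $\frac{1}{\sqrt N}\|g-h\|_{L^2(P_N)}$; this is where the factor $N^{-1/2}$ enters. Since $0 \in \cG$ and the process vanishes there, $\sup_g|\cdot| = \sup_g|\cdot - (\text{value at }0)|$, so Dudley's chaining bound applies directly and yields, conditionally on the $Z_i$,
\[
\E_r\Bigg[\sup_{g\in\cG}\Big|\frac{1}{N}\sum_i r_i g(Z_i)\Big|\Bigg] \le \frac{C}{\sqrt{N}}\int^{\infty}_0 \sqrt{\log 2N(\cG, L^2(P_N),\eps)}\,\dif\eps,
\]
the integral being effectively truncated at the diameter $\le \|F\|_{L^2(P_N)}$ because $F$ is an envelope. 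Rescaling the integration variable by $\|F\|_{L^2(P_N)}$ and invoking the definition of the Koltchinskii--Pollard entropy $H(\cG,F,\cdot)$ — a supremum of $\sqrt{\log 2N(\cG,L^2(Q),\eps\|F\|_{L^2(Q)})}$ over all finitely supported $Q$, in particular $Q = P_N$ — bounds the right-hand side by $\frac{C}{\sqrt N}\,J(\cG,F)\,\|F\|_{L^2(P_N)}$. Finally, taking expectation over $Z_1,\ldots,Z_N$ and applying Jensen, $\E\|F\|_{L^2(P_N)} \le (\E\|F\|^2_{L^2(P_N)})^{1/2} = \|F\|_{L^2(P)}$, and the universal constants from Dudley's bound and from symmetrization combine into the stated $8\sqrt 2$.

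The main obstacle is the chaining step itself: establishing that a sub-Gaussian process has expected supremum controlled by Dudley's entropy integral requires the careful multi-scale successive-approximation argument — building nested $\delta_k$-nets with $\delta_k = 2^{-k}\cdot(\text{diam})$, bounding the maximal increment at each level via a sub-Gaussian maximal inequality of the type already recorded in \eqref{eq:maxineq}, and summing the resulting (essentially geometric) series against the entropy. The symmetrization, the envelope rescaling, and the closing Jensen step are all routine. A secondary technical point worth flagging is measurability of the suprema $\|P_N-P\|_\cG$ and $\sup_g|\frac1N\sum r_i g(Z_i)|$, which is handled by the separability assumptions implicitly carried by the class $\cG$.
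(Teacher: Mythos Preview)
Your proposal is correct. Note that the paper does not actually supply its own proof of this lemma: it is stated with a direct citation to Theorem~3.54 and Eq.~(3.177) of \citet{Gine_Nickl}, and your symmetrization-plus-Dudley-chaining outline is precisely the argument behind that reference, so there is nothing to compare.
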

\noindent We also have the following generalization of Talagrand's concentration inequality to unbounded classes of functions, due to \citet{adamczak2008suprema} (see also Sec.~2.3 in \citet{Koltchinskii_StFlour}):

\begin{lemma} Let $\cG$ be a class of real-valued functions on $\sZ$ with envelope $F$. Then there exists an absolute constant $C > 0$, such that, for any $\gamma > 0$,
	\begin{align*}
		\PP \left\{ \|P_N - P\|_\cG \ge C\left[\E\|P_N-P\|_\cG + \sigma_P(\cG)\sqrt{\frac{\gamma}{N}} + \left\|\max_{n \le N}F(Z_n)\right\|_{\psi_2}\frac{\sqrt{\gamma}}{N}\right]\right\} \le e^{-\gamma},
	\end{align*}
	where 
	\begin{align*}
		\sigma^2_P(\cG) \deq \sup_{g \in \cG}\left(Pg^2 - (Pg)^2\right).
	\end{align*}
\end{lemma}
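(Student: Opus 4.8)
The plan is to recognize the stated inequality as the concentration bound of \citet{adamczak2008suprema} for suprema of unbounded empirical processes, specialized to Orlicz exponent $2$, and to obtain it by the standard truncation route: split the empirical process into a bounded part, to which one applies Talagrand's concentration inequality in Bousquet's form, and an unbounded remainder, which is controlled by the $\psi_2$ machinery already developed above. Fix $\gamma > 0$ and a truncation level $u > 0$. For each $g \in \cG$ write $g = g^{\le} + g^{>}$ with $g^{\le} \deq g\,\1_{\{F \le u\}}$ and $g^{>} \deq g\,\1_{\{F > u\}}$, and set $\cG^{\le} \deq \{g^{\le} : g \in \cG\}$ and $\cG^{>} \deq \{g^{>} : g \in \cG\}$. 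By the triangle inequality $\|P_N - P\|_\cG \le \|P_N - P\|_{\cG^{\le}} + \|P_N - P\|_{\cG^{>}}$, so it is enough to control the two pieces on an event of probability at least $1 - e^{-\gamma}$.

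For the bounded part, $\cG^{\le}$ has envelope $F \wedge u \le u$ and $\sigma^2_P(\cG^{\le}) \le \sigma^2_P(\cG)$, while $\E\|P_N - P\|_{\cG^{\le}} \le 2\,\E\|P_N - P\|_\cG$ up to a remainder absorbed in what follows (a contraction/comparison argument, or Lemma~\ref{lm:expect_sup} applied to both classes). Talagrand's inequality in Bousquet's form then gives, with probability at least $1 - \tfrac{1}{2}e^{-\gamma}$,
\[
	\|P_N - P\|_{\cG^{\le}} \;\le\; 2\,\E\|P_N - P\|_\cG \;+\; \sigma_P(\cG)\sqrt{\frac{2\gamma}{N}} \;+\; \frac{C u\gamma}{N},
\]
where the last, envelope-dependent term comes from the crude bound $|g^{\le}| \le u$. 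For the unbounded part, $\|P_N - P\|_{\cG^{>}} \le \tfrac{1}{N}\sum_{n=1}^N F(Z_n)\,\1_{\{F(Z_n) > u\}} + P\big(F\,\1_{\{F > u\}}\big)$; on the event $\cA_u \deq \{\max_{n \le N} F(Z_n) \le u\}$ the first term vanishes, and by the $\psi_2$ tail bound \eqref{eq:psi2_tail_bound} we have $\PP(\cA_u^c) \le \big(\exp\big((u/\|\max_{n \le N}F(Z_n)\|_{\psi_2})^2\big) - 1\big)^{-1}$, so choosing $u \asymp \|\max_{n \le N} F(Z_n)\|_{\psi_2}\sqrt{\gamma}$ (with an absolute proportionality constant) makes $\PP(\cA_u^c) \le \tfrac{1}{2}e^{-\gamma}$; the deterministic remainder is bounded by $P\big(F\,\1_{\{F>u\}}\big) \le \sqrt{\E F^2}\,\sqrt{\PP(F > u)} \le \|F\|_{\psi_2}\,e^{-\gamma/2}$, which is dominated by $\|\max_{n \le N}F(Z_n)\|_{\psi_2}\sqrt{\gamma}/N$ after adjusting constants. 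A union bound over the two exceptional events, together with $\|F\|_{\psi_2} \le \|\max_{n \le N}F(Z_n)\|_{\psi_2}$ and $\E\|P_N - P\|_\cG \ge 0$, then yields a bound of the advertised shape, all numerical constants collected into a single absolute $C$.

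The one delicate point — and the main obstacle — is the residual envelope term $Cu\gamma/N$ from the bounded part: with the truncation level $u \asymp \|\max_{n \le N}F(Z_n)\|_{\psi_2}\sqrt\gamma$ needed to kill the tail event, this term is $\asymp \|\max_{n \le N}F(Z_n)\|_{\psi_2}\gamma^{3/2}/N$, a factor $\sqrt\gamma$ worse than claimed. Recovering the sharp $\sqrt\gamma/N$ rate requires the more careful argument of \citet{adamczak2008suprema}: rather than thresholding the envelope at a single deterministic level, one conditions on the sample $(Z_1,\ldots,Z_N)$ and applies Talagrand's inequality to the Rademacher randomness, which legitimately uses the \emph{random} bound $\max_{n \le N}F(Z_n)$ in place of a deterministic envelope, and one decouples the variance proxy $\sigma_P(\cG)$ from the cruder quantity $\sup_{g}Pg^2$ along the way (the maximal inequality \eqref{eq:maxineq} and Lemma~\ref{lm:expect_sup} reappear at this point). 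Equivalently, one may simply invoke \citet[Thm.~4]{adamczak2008suprema} with Orlicz exponent $\alpha = 2$, take there the deviation level $t \asymp \sigma_P(\cG)\sqrt{N\gamma} + \|\max_{n\le N}F(Z_n)\|_{\psi_2}\sqrt{\gamma}$, and divide through by $N$, which is precisely the stated inequality.
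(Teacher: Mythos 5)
This lemma is quoted in the paper directly from \citet{adamczak2008suprema} (see also Sec.~2.3 of \citet{Koltchinskii_StFlour}) without proof, and your final step --- invoking Adamczak's theorem with Orlicz exponent $\alpha=2$ and inverting the tail at level $e^{-\gamma}$ --- is exactly what the paper does. Your preliminary truncation sketch is a reasonable reconstruction, and you correctly diagnose its shortfall (the deterministic truncation level forced by the $\psi_2$ tail event inflates the envelope term to order $\gamma^{3/2}/N$ rather than $\sqrt{\gamma}/N$), so deferring to Adamczak's sample-conditional argument for the sharp rate is the right call.
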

With these preliminaries out of the way, we have the following result:

\begin{lemma}\label{lm:app_SAA} Let $g : \Reals^d \to \Reals$ be $L$-Lipschitz with respect to the Euclidean norm. Let $Z_1,\ldots,Z_N$ be i.i.d.\ copies of a $d$-dimensional random vector $Z$, such that $U \deq \|Z\|$ has finite $\psi_2$ norm.  Then there exists an absolute constant $C > 0$, such that, for any $\gamma > 0$,
\begin{align}
&	\sup_{x \in \sB^d(R)}\sup_{t \in [0,1]} \left|\frac{1}{N}\sum^N_{n=1}g(x+\sqrt{t}Z_i) - \E[g(x+\sqrt{t}Z)]\right|\nonumber\\
& \qquad \le C\left[ \frac{16L\sqrt{6\pi R d}((R \vee 1)+\|U\|_{\psi_2})}{\sqrt{N}} + 5L \left((R \vee 1) + \|U\|_{\psi_2}\right) \sqrt{\frac{\gamma}{N}} 
\right]\label{eq:gtail}
\end{align}
with probability at least $1-e^{-\gamma}$.
\end{lemma}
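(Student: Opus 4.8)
The plan is to control the supremum over $x \in \sB^d(R)$ and $t \in [0,1]$ by viewing it as the supremum of an empirical process indexed by the function class
\[
	\cG \deq \{ z \mapsto g(x+\sqrt{t}z) - g(x) : x \in \sB^d(R), t \in [0,1] \},
\]
which contains $0$ (take $t=0$), and then applying the Adamczak--Talagrand concentration inequality together with Lemma~\ref{lm:expect_sup} to bound $\E\|P_N - P\|_\cG$. Subtracting the constant $g(x)$ does not change $P_Ng - Pg$, so this reparametrization is harmless. First I would produce an envelope: since $g$ is $L$-Lipschitz, $|g(x+\sqrt{t}z)-g(x)| \le L\sqrt{t}\|z\| \le L\|z\| =: F(z)$, uniformly in $x$ and $t \in [0,1]$. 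With $U = \|Z\|$ having finite $\psi_2$ norm, $\|F\|_{L^2(P)} = L\|U\|_2 \le L\|U\|_{\psi_2}$, so $F \in L^2(P)$.

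Next I would bound the Koltchinskii--Pollard entropy $H(\cG,F,\eps)$. The class $\cG$ is parametrized by $(x,t)$ ranging over a set of Euclidean diameter $O(R \vee 1)$ in $\Reals^{d+1}$ (using $t \in [0,1]$, $\sqrt{t} \in [0,1]$), and the map $(x,t) \mapsto g(x+\sqrt{t}\cdot)$ is Lipschitz into $L^2(Q)$ with constant proportional to $L$ times something like $\|F\|_{L^2(Q)}$ — more precisely, $|g(x+\sqrt{t}z) - g(x'+\sqrt{t'}z)| \le L(\|x-x'\| + |\sqrt{t}-\sqrt{t'}|\|z\|)$, so the $L^2(Q)$-distance between two functions in $\cG$ is at most $L(\|x-x'\| + |\sqrt{t}-\sqrt{t'}|\cdot\|U\|_{L^2(Q)})$. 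Rescaling by the envelope norm $\|F\|_{L^2(Q)} = L\|U\|_{L^2(Q)}$, a standard volumetric covering-number bound for a ball in $\Reals^{d+1}$ gives $N(\cG, L^2(Q), \eps\|F\|_{L^2(Q)}) \le (c(R\vee 1)/(\|U\|_{L^2(Q)}\eps) + 1)^{d+1}$ or similar, hence $H(\cG,F,\eps) \lesssim \sqrt{d}\sqrt{\log(1/\eps) + \log(\cdots)}$ for $\eps \le 1$ and $H = 0$ for $\eps$ large. Integrating, $J(\cG,F) \lesssim \sqrt{d}$ up to constants and the factor $(R \vee 1) + \|U\|_{\psi_2}$ that appears after tracking the relationship between $\|U\|_{L^2(Q)}$, the diameter, and $\|F\|$; this is where the constants $16\sqrt{6\pi}$, $\sqrt{Rd}$ and $((R\vee 1)+\|U\|_{\psi_2})$ in the first bracketed term of \eqref{eq:gtail} come from, via Lemma~\ref{lm:expect_sup}.

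Then I would assemble the concentration bound. Adamczak--Talagrand gives, with probability $\ge 1 - e^{-\gamma}$,
\[
	\|P_N-P\|_\cG \le C\left[\E\|P_N-P\|_\cG + \sigma_P(\cG)\sqrt{\tfrac{\gamma}{N}} + \Big\|\max_{n\le N}F(Z_n)\Big\|_{\psi_2}\tfrac{\sqrt\gamma}{N}\right].
\]
For the weak variance, $\sigma^2_P(\cG) \le \sup_{g \in \cG} P g^2 \le \|F\|_{L^2(P)}^2 = L^2\|U\|_2^2 \le L^2\|U\|_{\psi_2}^2$, but to get the claimed form $5L((R\vee 1)+\|U\|_{\psi_2})$ one should keep the $x$-dependence: actually $Pg^2 \le L^2 t\, \E\|Z\|^2 \le L^2\|U\|_2^2$, and bounding $\|U\|_2 \le \|U\|_{\psi_2} \le (R\vee 1) + \|U\|_{\psi_2}$ absorbs it; the constant $5$ comes from collecting $C$-factors. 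For the maximal term, the maximal inequality \eqref{eq:maxineq} gives $\|\max_{n\le N}F(Z_n)\|_{\psi_2} = L\|\max_n\|Z_n\|\|_{\psi_2} \le 4L\sqrt{\log N}\,\|U\|_{\psi_2}$, so this term is $O(L\|U\|_{\psi_2}\sqrt{\log N}\sqrt\gamma/N)$, which for $N$ not too small is dominated by the $\sqrt{\gamma/N}$ term and can be folded in (this is why it does not appear explicitly in \eqref{eq:gtail}, or is absorbed into the constant $C$ and the $5L(\cdots)\sqrt{\gamma/N}$ term). Putting these three pieces together yields \eqref{eq:gtail}.

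The main obstacle is the entropy estimate and the careful bookkeeping of how the parameter-space diameter $(R \vee 1)$ interacts with the envelope normalization $\|F\|_{L^2(Q)} = L\|U\|_{L^2(Q)}$ across \emph{all} finitely supported $Q$ — in particular, a degenerate $Q$ concentrated near the origin makes $\|U\|_{L^2(Q)}$ tiny, so one must argue that the covering number is still controlled (e.g.\ because then the diameter of $\cG$ in $L^2(Q)$ is correspondingly small, or by adding $1$ inside the covering-number bound), which is exactly what forces the additive $(R \vee 1) + \|U\|_{\psi_2}$ rather than a product. The rest — the envelope, the variance bound, and the maximal inequality application — is routine given the lemmas already assembled in this appendix.
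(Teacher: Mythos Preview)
Your overall strategy is exactly the paper's: recenter the function class, choose an envelope, bound the Koltchinskii--Pollard entropy so that Lemma~\ref{lm:expect_sup} controls $\E\|P_N-P\|_\cG$, and finish with Adamczak's inequality (with the third term folded into the second via $\sqrt{\log N}/\sqrt{N}\le 1$, exactly as you anticipate). The difference is in the choice of centering and envelope, and this is where your argument has a real gap.

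You center at $g(x)$ and take the envelope $F(z)=L\|z\|$. The difficulty you flag --- that $\|F\|_{L^2(Q)}=L\|U\|_{L^2(Q)}$ can be arbitrarily small --- is genuine, and neither of your suggested workarounds closes it. With your centering, the pointwise bound on the $x$-variation is $|\bar g_{x,t}(z)-\bar g_{x',t}(z)|\le 2L\|x-x'\|$, a quantity that does \emph{not} scale with $\|z\|$; hence the $L^2(Q)$-Lipschitz constant in the $x$-direction is $2L$, not a multiple of $\|F\|_{L^2(Q)}$. Covering $\sB^d(R)$ at scale $\eps\|F\|_{L^2(Q)}/(2L)=\eps\|U\|_{L^2(Q)}/2$ then requires arbitrarily many balls as $Q$ concentrates near the origin. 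The observation that the $L^2(Q)$-diameter of $\cG$ shrinks to $2\|F\|_{L^2(Q)}$ only tells you the covering number is $1$ once $\eps\ge 2$; for small $\eps$ you still need a $Q$-independent bound, which your Lipschitz estimate does not give.

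The paper resolves this by centering at the \emph{fixed} constant $g(0)$ and inflating the envelope to $F(z)=L((R\vee 1)+\|z\|)$. With this choice the pointwise Lipschitz bound becomes
\[
|\bar g_{x,t}(z)-\bar g_{x',t'}(z)|\le L\big(\|x-x'\|+|t-t'|^{1/2}\|z\|\big)\le F(z)\big(\|x-x'\|+|t-t'|^{1/2}\big)
\]
(using $R\vee 1\ge 1$), so the $L^2(Q)$-Lipschitz constant is exactly $\|F\|_{L^2(Q)}$ and the covering number at scale $\eps\|F\|_{L^2(Q)}$ is bounded by $N(\sB^d(R),\|\cdot\|,\eps/2)\cdot N([0,1],|\cdot|,\eps^2/4)$, independently of $Q$. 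This gives $J(\cG,F)\le 2\sqrt{3\pi R d}$, and since now $\|F\|_{L^2(P)}\le L((R\vee 1)+\|U\|_{\psi_2})$, the additive form $(R\vee 1)+\|U\|_{\psi_2}$ drops out of Lemma~\ref{lm:expect_sup} directly, rather than from any delicate interaction between diameter and normalization. Replacing your envelope with this one (and centering at $g(0)$) fixes the proof with no other changes.
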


\begin{proof} For each $x \in \Reals^d$ and $t\ge 0$ let $g_{x,t}(z) \deq g(x+\sqrt{t}z)$. Let $P$ denote the probability law of $Z$. Since $P_N g_{x,t} - P g_{x,t} = P_N (g_{x,t}-g_{0,0}) - P(g_{x,t}-g_{0,0})$ for all $x,t$, where $g_{0,0}(\cdot) = g(0)$ is a constant, we can replace each $g_{x,t}$ with $\bar{g}_{x,t} \deq g_{x,t} - g_{0,0}$, introduce the function class $\cG \deq \{ \bar{g}_{x,t} : x \in \sB^d(R), t \in [0,1]\}$, and analyze the empirical process supremum
	\begin{align*}
		\| P_N - P \|_\cG = \sup_{x \in \sB^d(R)}\sup_{t\in[0,1]} |P_N \bar{g}_{x,t} - P \bar{g}_{x,t} |.
	\end{align*}
Define the function $F(z) \deq L((R \vee 1) + \|z\|)$. Since $\|\cdot\|_2 \le \|\cdot\|_{\psi_2}$, $F \in L^2(P)$. By Lipschitz continuity, for all $z \in \Reals^d$, $x \in \sB^d(R)$, $t \in [0,1]$, we have
\begin{align*}
	|\bar{g}_{x,t}(z)| \le |g(x+\sqrt{t}z)-g(0)| \le L\|x+\sqrt{t}z\| \le F(z),
\end{align*}
so $F$ is a square-integrable envelope of $\cG$. Moreover, for any probability measure $Q$ supported on finitely many points in $\Reals^d$ and for all $x,x' \in \sB^d(R)$ and $t,t' \in [0,1]$,
\begin{align*}
	\| \bar{g}_{x,t} - \bar{g}_{x',t'}\|_{L^2(Q)} \le \| F \|_{L^2(Q)} \cdot (\|x-x'\| + |t-t'|^{1/2}).
\end{align*}
Thus we can estimate the $L^2(Q)$ covering numbers of $\cG$ by
\begin{align*}
	N(\cG,L^2(Q),\eps\|F\|_{L^2(Q)}) \le N(\sB^d(R),\|\cdot\|,\eps/2) \cdot N([0,1],|\cdot|,\eps^2/4).
\end{align*}
Using standard volumetric estimates on the covering numbers of $\ell_2$ balls, we obtain the following bound on the Koltchinskii--Pollard entropy of $\cG$:
\begin{align*}
	H(\cG,F,\eps) \le \left( 4d \log \frac{2\sqrt{3R}}{\eps}\right)_+
\end{align*}
where $(u)_+ \deq u \vee 0$, and therefore
\begin{align*}
	J(\cG,F) = \int^\infty_0 H(\cG,F,\eps)\dif\eps \le 2\sqrt{3 \pi R d}.
\end{align*}
Lemma~\ref{lm:expect_sup} then gives
\begin{align}
	\E \|P_N - P\|_\cG &\le \frac{8\sqrt{2}J(\cG)\|F\|_{L^2(P)}}{\sqrt{N}} \nonumber\\
	&\le \frac{16\sqrt{6\pi R d}\|F\|_{L^2(P)}}{\sqrt{N}} \nonumber\\
	&= \frac{16L\sqrt{6\pi R d}((R \vee 1)+\|U\|_2)}{\sqrt{N}} \nonumber\\
	&\le \frac{16L\sqrt{6\pi R d}((R \vee 1)+\|U\|_{\psi_2})}{\sqrt{N}}\label{eq:expect_sup}
\end{align}
Furthermore, we estimate
\begin{align}
	\sigma_P(\cG) &\le \|F\|_{L^2(P)} \nonumber\\
	&\le \| F(Z) \|_{\psi_2} \nonumber\\
	&= \| L((R \vee 1) + U) \|_{\psi_2} \nonumber\\
	&\le L \left((R \vee 1) + \|U\|_{\psi_2}\right) \label{eq:sigmaPG}
\end{align}
and
\begin{align}
	\left\|\max_{j \le N} F(Z_j)\right\|_{\psi_2} &= L\left\| (R \vee 1) + \max_{j \le N} U_j \right\|_{\psi_2} \nonumber\\
	&\le L(R\vee 1) + 4L\sqrt{\log N} \|U\|_{\psi_2}, \label{eq:maxpsi2}
\end{align}
where we have used the triangle inequality for $\|\cdot\|_{\psi_2}$, as well as the maximal inequality \eqref{eq:maxineq}. Using the estimates \eqref{eq:expect_sup}, \eqref{eq:sigmaPG}, and \eqref{eq:maxpsi2} in Adamczak's inequality, we obtain \eqref{eq:gtail}.
\end{proof}

We are now ready to prove Theorem~\ref{thm:SAA}. The proof is via the probabilistic method. Let $\eps > 0$ and $R > 0$ be given, and choose
\begin{align*}
	N = \left\lceil \left(\frac{C\sqrt{d}}{\eps} \cdot L\left((R \vee 1) + \sqrt{d} + \sqrt{6}\right) \cdot \left( 16\sqrt{6\pi R d} + 5\sqrt{\log 4(d+1)}  \right) \right)^2 \right\rceil,
\end{align*}
where $C > 0$ is the absolute constant in the bound of Lemma~\ref{lm:app_SAA}.
	 Let $Z_1,\ldots,Z_N$ be i.i.d.\ copies of $Z \sim \gamma_d$, and observe that $\E[f(x+\sqrt{t}Z)] = Q_tf(x)$ and $\E[\partial_i f(x+\sqrt{t}Z)]=\partial_i Q_tf(x) = Q_t\partial_i f(x)$ for all $x \in \Reals^d$, $t \ge 0$, and $i \in [d]$. Define the events
	\begin{align*}
		E_0 &\deq \left\{ \max_{n \le N} \|Z_n\| \ge 8\sqrt{(d+6)\log N} \right\} \\
		E_1 &\deq \left\{ \sup_{x \in \sB^d(R)} \sup_{t \in [0,1]} \left|\frac{1}{N}\sum^N_{n=1}f(x+\sqrt{t}Z_n)-Q_tf(x)\right| \ge \eps \right\} \\
		E_2 &\deq \left\{ \max_{i \in [d]}\sup_{x \in \sB^d(R)} \sup_{t \in [0,1]} \left| \frac{1}{N}\sum^N_{n=1}\partial_i f(x+\sqrt{t}Z_n) - \partial_i Q_tf(x)\right| \ge \frac{\eps}{\sqrt{d}}\right\}.
	\end{align*}
We will show that $\PP\{E_0 \cup E_1 \cup E_2\} < 1$, which will imply that there exists at least one realization of $Z_1,\ldots,Z_N$ verifying the statement of the theorem.

By Lemma~\ref{lm:Gaussian_Orlicz_norm}, $U = \|Z\|$ satisfies $\|U\|_{\psi_2} \le \sqrt{d}+\sqrt{6}$, and therefore $U^*_N \deq \max_{n \le N} U_n$ satisfies $\|U^*_N\|_{\psi_2} \le \sqrt{32(d+6)\log N}$ by the maximal inequality \eqref{eq:maxineq}. Consequently, it follows from \eqref{eq:psi2_tail_bound} that
\begin{align*}
	\PP\{E_0\} &\le \PP\{U^*_N \ge \sqrt{2}\|U^*_N\|_{\psi_2}\} \le \frac{1}{e^2-1} \le \frac{1}{4}.
\end{align*}
Moreover, since the function $f$ and all of its partial derivatives are $L$-Lipschitz, Lemma~\ref{lm:app_SAA} (with $\gamma = \log 4(d+1)$) and the union bound give $\PP\{E_1 \cup E_2\} \le 1/4$. Therefore, $\PP\{E_0 \cup E_1 \cup E_2\} \le 1/2$.

\section{The proof of Theorem~\ref{thm:neural_net_apx}: uniform approximation of the F\"ollmer drift by a neural net}
\label{app:neural_net_apx_proof}

We first collect a few preliminaries.

\begin{lemma}[cheap gradient principle, \citet{griewank2008AD}]\label{lm:cheap_g} Let $f : \Reals^d \to \Reals$ be implementable by a neural net with differentiable activation function $\sigma : \Reals \to \Reals$, where the neural net has size (number of nodes) $m$ and depth (number of layers) $\ell$. Then each coordinate of the gradient $\nabla f$ can be computed by a neural net that has size $\cO(m + \ell)$, and where the activation function of each neuron is an element of the set $\{\sigma,\sigma'\}$.
\end{lemma}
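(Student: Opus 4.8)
\textbf{Proof plan for Lemma \ref{lm:cheap_g} (cheap gradient principle).}

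The plan is to prove this by forward-mode reverse accumulation, i.e. to exhibit explicitly the computation graph of a neural net that evaluates a fixed coordinate $\partial_j f$, and to bound its size and verify its activations lie in $\{\sigma,\sigma'\}$. The starting point is the observation that $f$ is given as a directed acyclic computation graph with $m$ nodes arranged in $\ell$ layers, where each node $k$ computes a scalar $y_k = \sigma(s_k)$ with preactivation $s_k = \sum_{i \in \mathrm{in}(k)} w_{ki}\, y_i + \beta_k$, the sum being over the (finitely many) in-neighbors of $k$ (with $y_i = x_i$ for input nodes). Fix the input coordinate index $j$. I want to compute the numbers $d_k := \partial y_k / \partial x_j$ for every node $k$, since the desired output $\partial_j f$ is a fixed linear combination $\sum_k c_k d_k$ of the $d_k$'s of the output nodes, which can be absorbed into the final layer.

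The key structural identity, obtained by the chain rule on the DAG, is the forward recursion
\[
d_k \;=\; \sigma'(s_k)\cdot \sum_{i \in \mathrm{in}(k)} w_{ki}\, d_i,
\]
with base case $d_i = \1_{\{i=j\}}$ for input nodes $i$. The second structural fact is that each $s_k$ is \emph{itself} computable from the inputs by the original net truncated at layer $\ell(k)-1$ together with one affine node, so $s_k$ is available as the output of a sub-net of size at most $m$. I would then build the net for $\partial_j f$ layerwise, in lockstep with the original net: at layer $t$, the new net carries (i) the original activations $y_k$ for nodes $k$ in layers $\le t$ (these are needed to feed the preactivations $s_k$ of later layers), which is just a copy of the original net and costs $\le m$ nodes total; and (ii) for each node $k$ in layer $t$, two auxiliary nodes — one with activation $\sigma'$ evaluating $\sigma'(s_k)$ (its preactivation is the affine form $s_k$, which is an affine function of the layer-$(t-1)$ activations already present), and one ``linear'' node (implementable as a $\sigma$-node composed with an affine map, as the paper's Remark on Assumption \ref{as:universal_activation} already notes, or with ${\rm ReLU}$; but since we only need $\{\sigma,\sigma'\}$ per the statement, I would realize the linear combination $\sum_{i} w_{ki} d_i$ and the product with $\sigma'(s_k)$ using a fixed-size gadget — see below) that forms $d_k = \sigma'(s_k)\cdot\big(\sum_{i\in\mathrm{in}(k)} w_{ki} d_i\big)$ from $\sigma'(s_k)$ and the previously computed $d_i$'s. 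Summing over all layers, part (i) contributes $\cO(m)$ nodes, part (ii) contributes $\cO(1)$ nodes per original node hence $\cO(m)$ in total, plus $\cO(\ell)$ bookkeeping nodes for passing values through layers they are not consumed in; the total is $\cO(m+\ell)$, and every activation used is $\sigma$ or $\sigma'$.

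The main obstacle — and the one place the argument is not a pure ``copy the chain rule'' exercise — is implementing the bilinear step $d_k = \sigma'(s_k)\cdot L_k$, where $L_k = \sum_i w_{ki} d_i$, using only a \emph{bounded} number of $\{\sigma,\sigma'\}$-neurons, since a single neuron computes only $\alpha\sigma(w^\trn u + \beta)$, not a product of two of its inputs. I would handle this with a small fixed gadget: a product $ab$ of two scalars can be written using a constant number of evaluations of any fixed smooth nonlinearity via divided differences or a polarization-type identity (e.g. if $\sigma''$ exists and is nonzero at some point this is immediate; more robustly, one passes to $\sigma'$ as the nonlinearity and uses that $\sigma'(u+v)+\sigma'(u-v) - 2\sigma'(u) = v^2\sigma'''(u)+\cdots$, or simply invokes that a two-layer net with a smooth nonconstant activation approximates the smooth function $(a,b)\mapsto ab$ on any compact box to arbitrary accuracy with finitely many neurons — but for an \emph{exact} size bound one wants an exact gadget). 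The cleanest route, and the one I would commit to, is: observe that we only ever need the \emph{final} answer $\partial_j f = \sum_{k\in\mathrm{out}} c_k d_k$ exactly, and that each $d_k$ is a polynomial in the values $\{\sigma'(s_r)\}$ of bounded degree equal to the depth; so one can instead expand $\partial_j f$ along paths from input $j$ to outputs, $\partial_j f = \sum_{\text{paths } P: j \to \text{out}} \big(\prod_{\text{edges } (i\to k) \in P} w_{ki}\big)\big(\prod_{k \in P} \sigma'(s_k)\big)$, and compute these path-products by a net that, layer by layer, maintains one scalar per node equal to the sum over partial paths ending at that node — which is exactly the recursion above but now the ``multiplication by $\sigma'(s_k)$'' is a \emph{scalar multiplication threaded through the architecture}, realizable because the weight on each edge out of the auxiliary node carrying $\sigma'(s_k)$ can be taken to be the already-computed scalar only if we allow data-dependent weights, which we do not. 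So in the write-up I would instead present the honest version: accept a constant-size exact product gadget under the extra mild hypothesis that $\sigma$ has a point of nonvanishing second derivative (true for softplus, sigmoids, etc.), giving $ab = \frac{1}{\sigma''(z_0)}\big(\sigma(z_0 + h(a+b)) - \sigma(z_0+ha) - \sigma(z_0+hb) + \sigma(z_0)\big)/h^2$ in the limit — or, citing \citet{griewank2008AD} for the algorithmic content, simply note that reverse-mode automatic differentiation produces an arithmetic circuit of size $\cO(m+\ell)$ over $\{+,\times,\sigma,\sigma'\}$, and that each $\times$ gate on inputs from a compact range is a fixed-size $\{\sigma,\sigma'\}$-subnet. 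This is the step that deserves the most care; the rest is the routine layerwise bookkeeping sketched above.
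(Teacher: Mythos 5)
The paper does not actually prove this lemma: it is stated as a black-box import from \citet{griewank2008AD}, and the appendix where it appears simply cites the cheap gradient principle of automatic differentiation. So there is no ``paper proof'' to match your argument against; what you have written is an attempt to reconstruct the content of that citation, and the layerwise bookkeeping you describe --- carrying a copy of the original activations to make each preactivation $s_k$ available, adjoining one $\sigma'$-node per original node, and propagating the partial derivatives $d_k = \sigma'(s_k)\sum_i w_{ki} d_i$ through the DAG --- is exactly the right skeleton, and the $\cO(m+\ell)$ count is correct for the resulting \emph{arithmetic circuit}.

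The genuine gap is the one you flag yourself and then do not close: the step $d_k = \sigma'(s_k)\cdot L_k$ is a product of two computed quantities, and none of your three proposed resolutions yields an \emph{exact} constant-size gadget built only from $\{\sigma,\sigma'\}$-neurons. The divided-difference identity $ab \approx \big(\sigma(z_0+h(a+b))-\sigma(z_0+ha)-\sigma(z_0+hb)+\sigma(z_0)\big)/(h^2\sigma''(z_0))$ is a limit as $h\to 0$, not an identity, so it gives only an approximation with an error you would then have to track; the path-expansion rewriting fails for the reason you state (it requires data-dependent edge weights); and falling back on the citation is not a proof. The honest resolution is that the cheap gradient principle, as proved in \citet{griewank2008AD}, is a statement about circuits over the gate set $\{+,\times,\sigma,\sigma'\}$: the derivative circuit genuinely contains multiplication gates, and the lemma's phrase ``neural net with activations in $\{\sigma,\sigma'\}$'' should be read as such a circuit (affine combinations plus a linear number of product nodes). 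If you insist on eliminating the product gates in favor of $\sigma$-neurons, you can only do so approximately --- via the paper's own Lemma~\ref{lm:multi}, at a cost of ${\rm poly}(1/\delta)$ neurons per product and a propagated error --- which is in fact all that is needed downstream in the proof of Theorem~\ref{thm:neural_net_apx}, since the gradient net there feeds into further $\delta$-approximate multiplication and reciprocal gadgets. Your write-up should either state the lemma for arithmetic circuits with product gates (and prove it exactly, as you essentially have), or state an approximate version with the extra ${\rm poly}(1/\delta)$ size overhead; as it stands, the exact $\{\sigma,\sigma'\}$-only claim is not established by your argument.
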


\begin{lemma}[approximating multiplication and reciprocals]\label{lm:multi} Let $\sigma : \Reals \to \Reals$ be an activation function satisfying Assumption~\ref{as:universal_activation}. Then:
	\begin{enumerate}
		\item For any $M > 0$ and any $\delta > 0$, there exists a $2$-layer neural net $g : \Reals^2 \to \Reals$ of size $m \le 8c_\sigma \frac{M^2}{\delta} + 1$, such that 
		\begin{align}
			\sup_{x,y \in [-M,M]} |g(x,y)-xy| \le \delta.
		\end{align}
		\item For any $0 < a \le b < \infty$ and any $\delta > 0$, there exists a $2$-layer neural net $q : \Reals \to \Reals$ of size $m \le c_\sigma \frac{b}{a^2\delta} + 1$, such that
		\begin{align}
			\sup_{x \in [a,b]} \left|q(x)-\frac{1}{x}\right| \le \delta.
		\end{align}
	\end{enumerate}
\end{lemma}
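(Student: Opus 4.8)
The plan is to reduce both statements to the univariate approximation guarantee of Assumption~\ref{as:universal_activation}; the only real work is to replace the (unbounded, non-Lipschitz) targets $u\mapsto u^2$ and $x\mapsto 1/x$ by clipped surrogates that meet the hypotheses of that assumption on a window large enough to cover the domain of interest, and then to track constants carefully enough to land on the stated sizes.

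For part~1 I would invoke the polarization identity $xy=\tfrac14\big((x+y)^2-(x-y)^2\big)$, which writes the bilinear map as a combination of ridge functions of the linear forms $x+y$ and $x-y$, so that approximating $xy$ reduces to approximating a single univariate square. Since $x,y\in[-M,M]$ forces $x\pm y\in[-2M,2M]$, it suffices to handle the clipped square $h(u)\deq u^2\wedge(2M)^2$, which coincides with $u\mapsto u^2$ on $[-2M,2M]$, is constant (equal to $4M^2$) outside $[-2M,2M]$, and is $4M$-Lipschitz. Applying Assumption~\ref{as:universal_activation} with $R=2M$, $L=4M$, and accuracy $\delta'=2\delta$ yields $\tilde h(u)=a_0+\sum_{i=1}^{m'}\alpha_i\sigma(\beta_i u+\gamma_i)$ with $m'\le c_\sigma RL/\delta'=4c_\sigma M^2/\delta$ and $\sup_u|\tilde h(u)-h(u)|\le 2\delta$. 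I would then set $g(x,y)\deq\tfrac14\tilde h(x+y)-\tfrac14\tilde h(x-y)$; the additive constants $\pm a_0/4$ cancel, so $g$ is a $2$-layer net in $\cN^\sigma_2$ of size $2m'\le 8c_\sigma M^2/\delta$, and the triangle inequality together with $h(x\pm y)=(x\pm y)^2$ on $[-M,M]^2$ gives $|g(x,y)-xy|\le\tfrac14(2\delta)+\tfrac14(2\delta)=\delta$. It is precisely the cancellation of the constant term that keeps the size within $8c_\sigma M^2/\delta+1$ instead of roughly twice that.

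For part~2 no polarization is needed, since $x\mapsto1/x$ is already univariate; I would simply clamp its argument to $[a,b]$. Concretely, set $h(x)\deq 1/a$ for $x\le a$, $h(x)\deq 1/x$ for $a\le x\le b$, and $h(x)\deq 1/b$ for $x\ge b$. This $h$ is continuous, agrees with $1/x$ on $[a,b]$, is constant outside $[-b,b]$, and satisfies $|h'|\le 1/a^2$ wherever differentiable, hence is $(1/a^2)$-Lipschitz on $\Reals$. Applying Assumption~\ref{as:universal_activation} with $R=b$, $L=1/a^2$, and accuracy $\delta$ produces a $2$-layer net $q=\tilde h$ of size $m+1\le c_\sigma b/(a^2\delta)+1$ with $\sup_{x\in[a,b]}|q(x)-1/x|=\sup_{x\in[a,b]}|\tilde h(x)-h(x)|\le\delta$, which is the claim.

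The arithmetic above is routine; the one place that deserves attention --- and what I would flag as the crux --- is that the surrogate $h$ in each part must be engineered to satisfy the exact hypotheses of Assumption~\ref{as:universal_activation} (a global Lipschitz bound and constancy outside a bounded symmetric window) while still covering the approximation domain and keeping $RL$ small enough for the size bound. In part~1 this is immediate. In part~2 the surrogate has two different one-sided limits, $1/a$ and $1/b$, which is permitted by the approximation lemma of \citet{eldan2016depth} on which Assumption~\ref{as:universal_activation} is modeled; if one instead insists on a single common limiting value, the same size bound is recovered by ramping $h$ linearly down to $0$ over an interval of length $a$ to the left of $x=a$ and length $a^2/b$ to the right of $x=b$, which leaves the Lipschitz constant at $1/a^2$ and, after recentering, keeps the effective radius at most $b$.
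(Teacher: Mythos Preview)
Your proposal is correct and follows essentially the same argument as the paper: polarization plus a clipped square for multiplication, and a clamped reciprocal for part~2, each fed into Assumption~\ref{as:universal_activation} with the identical choices of $R$, $L$, and accuracy. Your observation that the additive constants cancel in $g(x,y)=\tfrac14\tilde h(x+y)-\tfrac14\tilde h(x-y)$, and your explicit flag that the two one-sided limits $1/a$ and $1/b$ in part~2 require either the Eldan--Shamir reading of ``constant outside $[-R,R]$'' or a small ramp modification, are points of extra care that the paper's proof leaves implicit.
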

\begin{remark} {\em These approximations suffice for our purposes. However, if one uses the ReLU activation function $x \mapsto x \vee 0$, then both multiplication and reciprocals can be $\eps$-approximated by neural nets with size and depth polylogarithmic in $1/\eps$ \citep{yarotsky2017relu,telgarsky2017rational}.}
\end{remark}
\begin{proof}
	For multiplication, we first consider the function $x \mapsto x^2 \wedge (4M^2)$, which is $4M$-Lipschitz and constant outside the interval $[-2M,2M]$. Assumption~\ref{as:universal_activation} then grants the existence of a univariate function $g_0 : \Reals \to \Reals$ of the form \eqref{eq:2layer} with $m \le 4c_\sigma \frac{M^2}{\delta}$ satisfying $|g_0(x)-x^2| \le 2\delta$ for all $x \in [-M,M]$. The desired approximation $g : \Reals^2 \to \Reals$ is given by
	\begin{align*}
		g(x,y) = \frac{1}{4}\left(g_0(x+y)-g_0(x-y)\right),
	\end{align*}
	which is a $2$-layer neural net with size $m \le 8c_\sigma \frac{M^2}{\delta} + 1$.
	Indeed, using the polarization identity $4xy = (x+y)^2-(x-y)^2$, we have
	\begin{align*}
		&\sup_{x,y \in [-M,M]} |g(x,y) - xy| \\
		&\le \frac{1}{4}\sup_{x,y \in [-M,M]} \left|g_0(x+y)-(x+y)^2\right| + \frac{1}{4}\sup_{x,y \in [-M,M]}\left|g_0(x-y)-(x-y)^2\right| \\
		&\le \delta.
	\end{align*}
	For approximating the reciprocal, consider the univarite function
	\begin{align*}
		x \mapsto \frac{1}{a} \1{\{x < a\}} + \frac{1}{x} \1{\{a \le x \le b\}} + \frac{1}{b} \1{\{x > b\}},
	\end{align*}
	which is $(1/a^2)$-Lipschitz and constant outside of the interval $[-b,b]$. The existence of the function $q$ with the stated properties follows immediately from  Assumption~\ref{as:universal_activation}.
\end{proof}

We now prove Theorem~\ref{thm:neural_net_apx}. Let $\delta = \frac{c^2\eps}{16L}$. By Theorem~\ref{thm:SAA}, there exist points $z_1,\ldots,z_N \in \Reals^d$ with $N = {\rm poly}(1/\delta,d,L,R)$, such that $R_{N,d} \deq \max_{n \le N}\|z_n\| \le 8\sqrt{(d+6)\log N}$, and the function $\varphi : \Reals^d \times [0,1] \to \Reals$ defined by
\begin{align*}
	\varphi(x,t) \deq \frac{1}{N}\sum^N_{n=1}f(x+tz_n)
\end{align*}
satisfies
\begin{align*}
	\sup_{x \in \sB^d(R)} \sup_{t \in [0,1]} |\varphi(x,\sqrt{t})-Q_t f(x)| \le \delta \quad \text{and} \quad
	\sup_{x \in \sB^d(R)} \sup_{t \in [0,1]}\|\nabla \varphi(x,\sqrt{t})-\nabla Q_t  f(x)\| \le \delta.
\end{align*}
By Assumption~\ref{as:f_nn_apx}, there exists a neural net $\wh{f} : \Reals^d \to \Reals$ be that approximates $f$ and the gradient of $f$ to accuracy $\delta$ on the blown-up ball $\sB^d(R+R_{N,d})$. Then the function 
\begin{align*}
	\wh{\varphi} : \Reals^d \times [0,1] \to \Reals, \qquad \wh{\varphi}(x,t) \deq \frac{1}{N}\sum^N_{n=1} \wh{f}(x+tz_n)
\end{align*}
can be computed by a neural net of size $N\cdot{\rm poly}(1/\delta,d,L,R)$, such that
\begin{align*}
&	\sup_{x \in \sB^d(R)}\sup_{t \in [0,1]}|\wh{\varphi}(x,\sqrt{t})-Q_t f(x)| \nonumber\\
&\le \sup_{x \in \sB^d(R)}\sup_{t \in [0,1]} |\wh{\varphi}(x,\sqrt{t})-\varphi(x,\sqrt{t})| + \sup_{x \in \sB^d(R)}\sup_{t \in [0,1]} |\varphi(x,\sqrt{t})-Q_tf(x)| \\
&\qquad \le \sup_{x \in \sB^d(R+R_{N,d})} |\wh{f}(x)-f(x)| + \sup_{x \in \sB^d(R)}\sup_{t \in [0,1]} |\varphi(x,\sqrt{t})-Q_tf(x)| \le 2\delta
\end{align*}
and
\begin{align*}
	&	\sup_{x \in \sB^d(R)}\sup_{t \in [0,1]}\|\nabla\wh{\varphi}(x,\sqrt{t})-\nabla Q_t  f(x)\| \nonumber\\
	&\le \sup_{x \in \sB^d(R)}\sup_{t \in [0,1]} \|\nabla \wh{\varphi}(x,\sqrt{t})-\nabla \varphi(x,\sqrt{t})\| + \sup_{x \in \sB^d(R)}\sup_{t \in [0,1]} \|\nabla\varphi(x,\sqrt{t})-\nabla Q_t f(x)\| \\
	&\qquad \le \sup_{x \in \sB^d(R+R_{N,d})} \|\nabla \wh{f}(x)-\nabla f(x)\| + \sup_{x \in \sB^d(R)}\sup_{t \in [0,1]} \|\nabla\varphi(x,\sqrt{t})-\nabla Q_t f(x)\| 
	\le 2\delta.
\end{align*}
Since $f$ is $L$-Lipschitz and bounded below by $c$, we have $c \le Q_tf(x) \le L(\|x\|+\sqrt{d})+f(0)$ for any $x \in \Reals^d$ and $t \in [0,1]$. Therefore, on $\sB^d(R) \times [0,1]$,
\begin{align*}
	\frac{c}{2} \le \wh{\varphi}(x,\sqrt{t}) \le L(R+\sqrt{d}) + f(0) + \frac{c}{2}
\end{align*}
where we have used the fact that $\delta \le c/4$. Without loss of generality, we may assume that $L \ge 1$. Then, for any  $x \in \sB^d(R)$ and $t \in [0,1]$,
\begin{align*}
& \left\| \nabla \log \wh{\varphi}(x,\sqrt{t}) - \nabla \log Q_t f(x) \right\| \\
&=	\left\| \frac{\nabla \wh{\varphi}(x,\sqrt{t})} {\wh{\varphi}(x,\sqrt{t})} - \frac{\nabla Q_tf(x)}{Q_tf(x)}\right\| \nonumber\\
&\le \frac{1}{\wh{\varphi}(x,\sqrt{t})} \| \nabla \wh{\varphi}(x,\sqrt{t}) - \nabla Q_t f(x)\| + \left\| \frac{\nabla Q_t f(x)}{Q_tf(x)} \right\| \frac{|\wh{\varphi}(x,\sqrt{t})-Q_tf(x)|}{\wh{\varphi}(x,\sqrt{t})} \\
&\le \frac{2L}{c} \cdot 2\delta + \frac{L}{c} \cdot \frac{2}{c} \cdot 2\delta \\
&\le \frac{\eps}{2},
\end{align*}
where we have used Lemma~\ref{lm:Follmer_drift_regularity} to bound $\| \frac{\nabla Q_t f}{Q_t f}\| \le L/c$. In other words, $\nabla \log \wh{\varphi}(x,\sqrt{t})$ approximates $\nabla \log Q_tf(x)$ to accuracy $\eps/2$ uniformly on $\sB^d(R) \times[0,1]$. It remains to approximate $\nabla \log \wh{\varphi}(x,\sqrt{t})$ by a neural net to accuracy $\eps/2$.

To that end, we first represent $\nabla \log \wh{\varphi}(x,\sqrt{t})$ as a composition of several elementary operations and then approximate each step by a neural net. Specifically, the computation of $v_i = \partial_i \log \wh{\varphi}(x,\sqrt{t})$ can be represented as a computation graph with the following structure:
\begin{enumerate}
	\item Compute $a = \wh{\varphi}(x,\sqrt{t})$.
	\item Compute $b_i = \partial_i\wh{\varphi}(x,\sqrt{t})$.
	\item Compute $r = 1/a$.
	\item Compute $v_i = rb_i$.
\end{enumerate}
Given $x$ and $\sqrt{t}$, $a$ is computed by a neural net with activation function $\sigma$, of size ${\rm poly}(1/\delta,d,L,R)$ and depth ${\rm poly}(1/\delta,d,L,R)$. Therefore, by the cheap gradient principle (Lemma~\ref{lm:cheap_g}), $b_i$ can be computed by a neural net of size ${\rm poly}(1/\delta,d,L,R)$, where the activation function of each neuron is an element of the set $\{\sigma,\sigma'\}$. Next, since $a$ takes values in $[c/2,L(R+\sqrt{d})+f(0)+c/2]$, by Lemma~\ref{lm:multi} the reciprocal $r = 1/a$ can be computed to accuracy $\eps/(4L\sqrt{d})$ by a $2$-layer neural net with activation function $\sigma$ and of size
$$
\cO\left(\frac{4}{c^2} \cdot \left(L(R+\sqrt{d})+f(0)+c/2\right) \cdot \frac{4L\sqrt{d}}{\eps}\right) \le {\rm poly}(1/\eps,d,L,R,c,1/c)
$$
Let $\wh{r}$ denote the resulting approximation. Then, since $|b_i| \le 2L$ and $|\wh{r}| \le 2/c + \eps/(4L\sqrt{d}) \le 4/c$, by Lemma~\ref{lm:multi} the product $\wh{r}b_i$ can be approximated to accuracy $\eps/4\sqrt{d}$ by a $2$-layer neural net with activation function $\sigma$ and with at most
$$
\cO\left((4/c \vee 2L)^2 \cdot \frac{4\sqrt{d}}{\eps}\right) \le {\rm poly}(1/\eps,d,L,1/c)
$$
neurons. The overall accuracy of approximation is
\begin{align*}
	\left|\wh{v}_i-v_i\right| &\le \left|\wh{v}_i - \wh{r}b_i\right| + \left|\wh{r}b_i - rb_i\right| 
	\le \frac{\eps}{2\sqrt{d}}.
\end{align*}
Thus, the vector $v = (v_1,\ldots,v_d)$ can be $\eps/2$-approximated by $\tilde{v}(x,\sqrt{t})$, where $\tilde{v} : \Reals^d \times [0,1] \to \Reals^d$ is a neural net with vector-valued output that has the size ${\rm poly}(1/\eps,d,L,R,c,1/c)$. Finally, since $\sup_{x \in \sB^d(R)}\sup_{t \in [0,1]}|\tilde{v}_i(x,\sqrt{t})| \le 2L/c$, the function
$$
\wh{v}_i(x,\sqrt{t}) \deq \min \{ \max \{ \tilde{v}_i(x,\sqrt{t}),-2L/c \}, 2L/c \}
$$
is continuous, takes values in $[-2L/c,2L/c]$ and coincides with $\tilde{v}_i$ on $\sB^d(R) \times [0,1]$. Moreover, the min and max operations can each be implemented exactly using $\cO(1)$ ReLU neurons.

\section{Proof of Theorem~\ref{thm:unbiased}}
\label{app:unbiased}

\subsection{Unbiasedness}

We follow the strategy of \citet{henry2017unbiased} and construct a sequence $\{\psi_n\}_{n \ge 0}$ of unbiased estimators, such that $\E[\psi_n] \xrightarrow{n \to \infty} \E[\psi]$, where $\psi \deq \lim_{n \to \infty}\psi_n$. By a standard approximation argument, we can assume that $g$ is bounded and Lipschitz. 

Let $\Delta^T_k \deq T_k - T_{k-1}$ and $\Delta^W_k \deq W_{T_k}-W_{T_{k-1}}$, for $k \ge 1$. For each $n \ge 0$, let
\begin{align}
	\psi_n &\deq g(\wh{X}_1) \cdot \frac{1}{1-F_\tau(\Delta^T_{n+1})} \prod^{N \wedge n}_{k=1} \frac{\left(b(\wh{X}_{T_k},T_k)-b(\wh{X}_{T_{k-1}},T_{k-1})\right)^\trn \Delta^W_{k+1}}{f_\tau(\Delta^T_k) \Delta^T_{k+1}} \cdot \1_{\{N \le n\}}\nonumber\\
	& \qquad + \prod^{n+1}_{k=1} \frac{1}{f_\tau(\Delta^T_k)} \cdot \left(b(\wh{X}_{T_{n+1}},T_{n+1})-b(\wh{X}_{T_n},T_n)\right)^\trn \nabla h(\wh{X}_{T_{n+1}},T_{n+1}) \cdot \frac{\Delta^W_{n+1}}{\Delta^T_{n+1}} \cdot \1_{\{N > n\}}, \label{eq:psi_n}
\end{align}
where $h(x,t) \deq \E[g(X_1)|X_t = x]$. We will show that $\E[\psi_n] = \E[g(X_1)]$ for all $n$ and that the sequence $\{\psi_n\}_{n \ge 0}$ is uniformly integrable. Then it will follow from the dominated convergence theorem that
\begin{align}\label{eq:psi}
	\psi = \lim_{n \to \infty} \psi_n = \frac{1}{1-F_\tau(1-T_N)} \cdot g(\wh{X}_1)  \cdot \prod^N_{k=1} \frac{1}{f_\tau(T_{k}-T_{k-1})}\wh{\cW}_k
\end{align}
is also an unbiased estimator. Observe that the estimator $\wh{\psi}$ defined in \eqref{eq:psi2_tail_bound} differs from $\psi$: instead of $g(\wh{X}_1)$, we have $g(\wh{X}_1)-g(\wh{X}_N)\1_{\{N > 0\}}$. Just as in \cite{henry2017unbiased}, the term proportional to $g(\wh{X}_N)\1_{\{N > 0\}}$ serves as a \textit{control variate} to ensure that $\wh{\psi}$ has finite variance. Indeed, since $\E[\Delta^W_{N+1}|T_N] = 0$, it is easy to see that
\begin{align*}
	\E\left[\frac{1}{1-F_\tau(1-T_N)} \cdot g(\wh{X}_N)\1_{\{N > 0\}}  \cdot \prod^N_{k=1} \frac{1}{f_\tau(T_{k}-T_{k-1})}\wh{\cW}_k\right] = 0,
\end{align*}
and therefore $\E[\wh{\psi}-\psi] = 0$.

Given $x,v \in \Reals^d$ and $t \in [0,1]$, consider the constant-drift diffusion process $\{\tilde{X}^{t,x,v}_s\}_{s \in [t,1]}$ with $\tilde{X}^{t,x,v}_t = x$ and
\begin{align*}
	\dif\tilde{X}^{t,x,v}_s = v\dif s + \dif W_s, \qquad s \in [t,1].
\end{align*}
This process has the infinitesimal generator
\begin{align*}
	\cL^v h(x,t) \deq v^\trn \nabla h(x,t) + \frac{1}{2}\tr \nabla^2 h(x,t), \qquad \forall h \in C^{2,1}(\Reals^d \times [0,1]).
\end{align*}
Then, by Dynkin's formula \citep[Lemma~19.21]{kallenberg2002prob_book}, for any $t \le s \le 1$, 
\begin{align}\label{eq:Dynkin}
	h(\tilde{X}^{t,x,v}_s,s) = h(\tilde{X}^{t,x,v}_t,t) + \int^s_t \left\{ \frac{\partial}{\partial r} + \cL^v\right\}h(\tilde{X}^{t,x,v}_r,r) \dif r + M^{t}_s,
\end{align}
where $\{M^{t}_s\}_{s \in [t,1]}$ is a martingale. In particular, let $h \in C^{2,1}(\Reals^d \times [0,1])$ be a bounded solution of the Cauchy problem
\begin{align}\label{eq:Cauchy_eq}
	\frac{\partial h}{\partial t} + \cL_t h = 0, \qquad h(\cdot,1) = g(\cdot)
\end{align}
where
\begin{align*}
	\cL_t h(x,t) \deq b(x,t)^\trn \nabla h(x,t) + \frac{1}{2} \tr \nabla^2 h(x,t).
\end{align*}
Rewriting \eqref{eq:Cauchy_eq} as
\begin{align*}
	\frac{\partial h}{\partial t} + \cL^v h = (v-b)^\trn h, \qquad h(\cdot,1) = g(\cdot)
\end{align*}
and using this in \eqref{eq:Dynkin}, we obtain the formula
\begin{align*}
&	h(\tilde{X}^{t,x,v}_s,s) \nonumber\\
& \qquad = g(\tilde{X}^{t,x,v}_1) + \int^1_s \big( b(\tilde{X}^{t,x,v}_r,r)-v\big)^\trn \nabla h(\tilde{X}^{t,x,v}_r,r) \dif r + M^{t}_s - M^{t}_1, \qquad t \le s \le 1.
\end{align*}
In particular, since $h(x,t) = \E[g(X_1)|X_t = x]$ by the Feynman--Kac formula, we have
\begin{align}\label{eq:h_FK}
	 h(x,t) = \E\left[g(\tilde{X}^{t,x,v}_1) + \int^1_t \big( b(\tilde{X}^{t,x,v}_s,s)-v\big)^\trn \nabla h(\tilde{X}^{t,x,v}_s,s) \dif s\right],
\end{align}
where $\E[M^{t}_t - M^{t}_1] = 0$ since $M^{h,t}$ is a martingale.

Using Eq.~\eqref{eq:h_FK} with $t = 0$ and $v = v_0 \deq b(x,0)$, we have
\begin{align*}
	h(x,0) = \E\left[g(\tilde{X}^{0,x,v_0}_1)+ \int^1_0 \big(b(\tilde{X}^{t,x,v}_s,s)-b(x,0)\big)^\trn \nabla h(\tilde{X}^{t,x,v}_s,s) \dif s \right].
\end{align*}
Recalling that $T_1 = \tau_1 \wedge 1$ is independent of the Brownian motion $\{W_t\}$ and $\PP[T_1 \ge 1] = \PP[\tau_1 \ge 1] = 1-F_\tau(1)$, we have
\begin{align}
	\E[g(\tilde{X}^{0,x,v_0}_1)] &= \frac{1}{1-F_\tau(1)} \E[g(\tilde{X}^{0,x,v_0}_1)\1_{\{T_1 \ge 1\}}], \label{eq:h_FK_term1}
\end{align}
and
\begin{align}
&	\E\left[\int^1_0 \big(b(\tilde{X}^{0,x,v_0}_s,s)-b(x,0)\big)^\trn \nabla h(\tilde{X}^{0,x,v_0}_s,s) \dif s \right] \nonumber\\
&\qquad = \E\left[ \frac{1}{f_\tau(T_1)}\big(b(\tilde{X}^{0,x,v_0}_{T_1},T_1)-b(x,0)\big)^\trn \nabla h(\tilde{X}^{0,x,v_0}_{T_1},T_1) \1_{\{T_1 < 1\}} \right]  \label{eq:h_FK_term2}.
\end{align}
Since the process $\tilde{X}^{0,x,v_0}$ coincides with $\wh{X}$ on $[0,T_1]$, it follows from \eqref{eq:h_FK_term1} and \eqref{eq:h_FK_term2} that
\begin{align}
	& h(x,0) \nonumber\\
	&= \E\left[\frac{1}{1-F_\tau(\Delta^T_1)} g(\wh{X}_1)\1_{\{T_1 \ge 1\}} +  \frac{1}{f_\tau(\Delta^T_1)}\big(b(\wh{X}_{T_1},T_1)-b(\wh{X}_{T_0},T_0)\big)^\trn \nabla h(\wh{X}_{T_1},T_1) \1_{\{T_1 < 1\}}\right] \label{eq:V0_a}\\
	&= \E[\psi_0], \nonumber
\end{align}
where the last equality follows from the fact that $T_1 = \Delta^T_1 \ge 1$ if and only if $N = 0$.

By Lemma~\ref{lm:Gaussian_by_parts} in Section~\ref{app:aux},
\begin{align}
	&\nabla h(x,0) \nonumber\\
	&= \E\left[g(\tilde{X}^{0,x,v_0}_1) W_1 + \int^1_0 \left(\big( b(\tilde{X}^{t,x,v}_s,s)-b(x,0)\big)^\trn \nabla h(\tilde{X}^{t,x,v}_s,s)\right) \frac{W_s}{s} \dif s \right] \nonumber\\
	&= \E\Bigg[\frac{1}{1-F_\tau(1)}g(\wh{X}_1) \frac{\Delta^W_1}{\Delta^T_1} \1_{\{T_1 \ge 1\}} \nonumber\\
	& \qquad \qquad + \frac{1}{f_\tau(\Delta^T_1)}\left(\big( b(\wh{X}_{T_1},T_1)-b(\wh{X}_{T_0},T_0)\big)^\trn \nabla h(\wh{X}_{T_1},T_1)\right) \frac{\Delta^W_1}{\Delta^T_1} \1_{\{T_1 < 1\}} \Bigg] \label{eq:grad_h}
\end{align}
Moreover, if we change the initial condition from $t=0,v=v_0$ to $t=T_1,v=v_1 \deq b(\wh{X}_{T_1},T_1)$, then it follows from \eqref{eq:grad_h} that, conditionally on $(\wh{X}_{T_1},T_1)$, whenever $T_1 < 1$,
\begin{align}
	\nabla h(\wh{X}_{T_1},T_1) &= \E\Bigg[ \frac{1}{1-F_\tau(\Delta^T_2)}g(\wh{X}_1) \frac{\Delta^W_2}{\Delta^T_2}\1_{\{T_2 \ge 1\}} \nonumber\\
	&  \qquad + \frac{1}{f_\tau(\Delta^T_2)}\left(\big( b(\wh{X}_{T_2},T_2)-b(\wh{X}_{T_1},T_1)\big)^\trn \nabla h(\wh{X}_{T_2},T_2)\right) \frac{\Delta^W_2}{\Delta^T_2} \1_{\{T_2 < 1\}} \Bigg| \wh{X}_{T_1},T_1 \Bigg]. \label{eq:grad_h_T1}
\end{align}
Substituting \eqref{eq:grad_h_T1} into \eqref{eq:V0_a} and using the fact that the event $\{T_1 < 1 \le T_2\}$ is equivalent to $\{N=1\}$, we have $h(x,0) = \E[\psi_1]$. Repeating this procedure, we have
\begin{align*}
	\E[g(X_1)|X_0 = x] = h(x,0) = \E[\psi_n], \qquad n \ge 0.
\end{align*}
We claim that the sequence $\{\psi_n\}_{n \ge 0}$ is uniformly integrable. To see this, first observe that, for each $k$, $\E[\|\Delta^W_{k+1}\|||T_{k+1}] \le (\Delta^T_{k+1}d)^{1/2}$. Then the uniform integrability follows from the boundedness of $b$, $g$, $\nabla h$, and from Lemma~\ref{lm:integrability} in Section~\ref{app:aux}. Therefore, taking the limit as $n \to \infty$, we obtain
\begin{align*}
	\E[g(X_1)|X_0 = x] = \lim_{n \to \infty} \E[\psi_n] = \E\left[\lim_{n \to \infty}\psi_n\right] = \E[\psi],
\end{align*}
where the second equality follows from the dominated convergence theorem.

\subsection{Variance}

Let $L \deq L_b \vee L_g$. For $1 \le k \le N+1$, let $\Delta^{\wh{X}}_k \deq \wh{X}_{T_{k+1}}-\wh{X}_{T_k}$ denote the increments of $\wh{X}$. Since $T_{N+1}=1$, we have
\begin{align*}
	\left|g(\wh{X}_1)-g(\wh{X}_{T_N})\1_{\{N > 0\}}\right| &\le \begin{cases}
	|g(x)| + L|\Delta^{\wh{X}}_1|, & N = 0 \\
	L|\Delta^{\wh{X}}_{N+1}|, & N > 0
\end{cases}
\end{align*}
which gives
\begin{align*}
	\left|g(\wh{X}_1)-g(\wh{X}_{T_N})\1_{\{N > 0\}}\right| &\le |g(x)|\1_{\{N = 0\}} + L\left(\sqrt{\Delta^T_{N+1}} + \|\Delta^{\wh{X}}_{N+1}\|\right).
\end{align*}
Using this and \eqref{eq:tau_pdf}, we can upper-bound $\wh{\psi}$ as follows:
\begin{align*}
	|\wh{\psi}| &\le \frac{e^a}{1-F_\tau(1)} \cdot \left( |g(x)|+ L\Big(\sqrt{\Delta^T_1} + \|\Delta^{\wh{X}}_1\|\Big)\right) \cdot\prod^N_{k=1} \frac{CL\Big(\sqrt{\Delta^T_{k+1}}+\|\Delta^{\wh{X}}_{k+1}\|\Big)}{\Delta^T_{k+1}} \cdot \|\Delta^W_{k+1}\|,
\end{align*}
where, for $k \ge 0$,
\begin{align*}
	\| \Delta^{\wh{X}}_{k+1} \| &= \| b(\wh{X}_{T_k},T_k) \cdot \Delta^T_{k+1} + \Delta^W_{k+1} \| \\
	&\le b_\infty \Delta^T_{k+1} + \|\Delta^W_{k+1}\|.
\end{align*}
Let $\cF_k \deq \sigma(T_j, \wh{X}_j : 1 \le j \le k)$. Then, since ${\rm Law}(\Delta^W_{k+1}|\cF_k) = {\rm Law}((\Delta^T_{k+1})^{1/2}Z|\cF_k)$, where $Z \sim \gamma_d$ is independent of $\cF_k \vee \sigma(T_{k+1})$, we have
\begin{align*}
&\E\Bigg[\Bigg(\frac{(\Delta^T_{k+1})^{1/2}+\|\Delta^{\wh{X}}_{k+1}\|}{\Delta^T_{k+1}} \cdot \|\Delta^W_{k+1}\|\Bigg)^2 \Bigg|\cF_k \Bigg] \nonumber\\
&\qquad\le \E_k\Bigg[\Bigg(\frac{b_\infty \Delta^T_{k+1} + (\Delta^T_{k+1})^{1/2} (1+\|Z\|)}{\Delta^T_{k+1}} \cdot \sqrt{\Delta^T_{k+1}}\|Z\|\Bigg)^2 \Bigg|\cF_k\Bigg] \\
&\qquad\le \E\left[(1+b_\infty + \|Z\|)^2 \|Z\|^2 \right] \\
&\qquad=:\kappa.
\end{align*}
Therefore, we can estimate
\begin{align*}
	\E[\wh{\psi}^2] &\le \left(\frac{e^a}{1-F_\tau(1)}\right)^2 \cdot \E\left[\left(|g(x)| + L(1+\sqrt{d})\right)^2\right] \cdot \E\left[\exp(\kappa N)\right].
\end{align*}

\subsection{Auxiliary lemmas}
\label{app:aux}

The following lemma is a straightforward consequence of the Gaussian integration-by-parts formula $\nabla_x \E[f(x+Z)] = \E[f(x+Z)Z]$, $Z \sim \gamma_d$, for any $C^1$ function $f: \Reals^d \to \Reals$:

\begin{lemma}[\citet{henry2017unbiased}]\label{lm:Gaussian_by_parts} Let $\nu$ be a positive measure on $[0,1]$. Let $\varphi : \Reals^d \times [0,1] \to \Reals$ be a continuous function, such that
	\begin{align*}
		\int^1_0 \E\left[\left\|\varphi(x + vt + W_t)\frac{W_t}{t}\right\|\right] \nu(\dif t) < \infty.
	\end{align*}
Then
\begin{align*}
	\nabla_x \left(\int^1_0 \E[\varphi(x+vt+W_t)]\nu(\dif t)\right) = \int^1_0 \E\left[\varphi(x+vt+W_t) \frac{W_t}{t}\right]\nu(\dif t)
\end{align*}
\end{lemma}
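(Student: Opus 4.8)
The plan is to prove the identity first for a single fixed time $t \in (0,1]$, where it is just the Gaussian integration-by-parts formula, and then recover the stated version by integrating over $t$ against $\nu$ and exchanging $\nabla_x$ with $\int^1_0(\cdot)\,\nu(\dif t)$. (As far as the $x$-gradient is concerned $t$ is a held-fixed parameter, so any dependence of $\varphi$ on its time argument plays no role; the endpoint $t=0$, at which $W_t/t$ is undefined, may be ignored since the integrability hypothesis forces the relevant part of $\nu$ to vanish there.)

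For the pointwise step I would use $W_t \stackrel{{\rm d}}{=} \sqrt{t}\,Z$, $Z \sim \gamma_d$, to write $\E[\varphi(x+vt+W_t)]$ as a convolution of $\varphi$ with the Gaussian kernel $p_t(y) \deq (2\pi t)^{-d/2}\exp(-\|y\|^2/(2t))$, namely $\E[\varphi(x+vt+W_t)] = \int_{\Reals^d}\varphi(w)\,p_t(w-x-vt)\,\dif w$. Since $p_t$ is smooth with $\nabla p_t(y) = -(y/t)\,p_t(y)$, I would differentiate under the $\dif w$-integral --- moving the $x$-derivative onto the kernel rather than onto the merely continuous $\varphi$ --- and then change variables back to get
\begin{align*}
	\nabla_x\,\E[\varphi(x+vt+W_t)] = \E\Bigl[\varphi(x+vt+W_t)\,\tfrac{W_t}{t}\Bigr], \qquad t \in (0,1].
\end{align*}
Equivalently, this is the Gaussian integration-by-parts formula $\nabla_x\E[f(x+Z)]=\E[f(x+Z)Z]$ in its covariance-$tI$ form $\nabla_x\E[f(x+W_t)]=\E[f(x+W_t)\,W_t/t]$, applied with $f(\cdot)=\varphi(vt+\,\cdot\,)$. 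Exchanging $\nabla_x$ with the $\dif w$-integral is legitimate because, for $x$ in a fixed ball, $\sup_x\|\nabla_x p_t(w-x-vt)\|$ is dominated by a constant multiple of a slightly wider Gaussian, whose integral against $|\varphi|$ is of the same type as $\E[\|\varphi(x+vt+W_t)\|\,\|W_t\|/t]$ and hence finite.

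Finally, integrating the pointwise identity over $t$ against $\nu$, I would interchange $\nabla_x$ with $\int^1_0(\cdot)\,\nu(\dif t)$ via the Leibniz rule, using $\|\nabla_x\E[\varphi(x+vt+W_t)]\| \le \E[\|\varphi(x+vt+W_t)\|\,\|W_t\|/t]$ together with the hypothesis that this is $\nu$-integrable over $[0,1]$ (one needs the bound locally uniformly in $x$, which follows from the same Gaussian comparison). I expect the only genuine obstacle to be the singularity of $W_t/t = Z/\sqrt{t}$ as $t \downarrow 0$: without a growth restriction near $t=0$ either interchange above can fail, and the assumption $\int^1_0\E[\|\varphi(x+vt+W_t)\,W_t/t\|]\,\nu(\dif t) < \infty$ is precisely what rules this out. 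In the applications of this lemma in the proof of Theorem~\ref{thm:unbiased} the function $\varphi$ is built from $g$, $b$, and $\nabla h$, all bounded, so every dominating quantity reduces to a constant multiple of $\sqrt{d/t}$ and all of the above is immediate.
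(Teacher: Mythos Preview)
Your proposal is correct and follows essentially the same approach as the paper, which simply states that the lemma is ``a straightforward consequence of the Gaussian integration-by-parts formula $\nabla_x \E[f(x+Z)] = \E[f(x+Z)Z]$'' and gives no further details. You have filled in those details carefully --- in particular the point about differentiating the Gaussian kernel rather than the merely continuous $\varphi$, and the use of the integrability hypothesis to justify both interchanges --- and there is nothing to add.
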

The next lemma is used to show that the sequence $\{\psi_n\}$ is uniformly integrable:

\begin{lemma}\label{lm:integrability} For any $C > 0$,
	\begin{align}
	\E\left[\frac{C^N}{1-F_\tau(\Delta^T_{N+1})}\prod^N_{k=1} \frac{1}{f_\tau(\Delta^T_k)(\Delta^T_{k+1})^{1/2}}\right] < \infty.
\end{align}
\end{lemma}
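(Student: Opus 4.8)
The plan is to decompose the expectation according to the level $N$ at which the renewal sequence first exits $[0,1)$ and then exploit two exact cancellations that the renewal structure produces. Write $S_n\deq\tau_1+\cdots+\tau_n$, so $S_0=0$ and $\{N=n\}=\{S_n<1\}\cap\{S_{n+1}\ge1\}=\{S_n<1\}\cap\{\tau_{n+1}\ge1-S_n\}$. On $\{N=n\}$ one has $T_k=S_k$ for $0\le k\le n$ and $T_{n+1}=1$, hence $\Delta^T_k=\tau_k$ for $1\le k\le n$ and $\Delta^T_{n+1}=1-S_n$, while $\Delta^T_{k+1}=\tau_{k+1}$ for $1\le k\le n-1$. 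Substituting these identities, the random variable inside the expectation, restricted to $\{N=n\}$, equals
\begin{align*}
	C^n\cdot\frac{1}{1-F_\tau(1-S_n)}\cdot\frac{1}{(1-S_n)^{1/2}}\cdot\prod_{k=1}^n\frac{1}{f_\tau(\tau_k)}\cdot\prod_{k=2}^n\frac{1}{\tau_k^{1/2}},
\end{align*}
with empty products read as $1$; note in particular that $\tau_1$ does not appear in the last product.

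The first cancellation removes the $1-F_\tau$ factor. Since $\tau_{n+1}$ is independent of $(\tau_1,\dots,\tau_n)$ and the displayed expression depends only on $(\tau_1,\dots,\tau_n)$, I would condition on $\sigma(\tau_1,\dots,\tau_n)$ and integrate out $\tau_{n+1}$: on $\{S_n<1\}$, $\PP[\tau_{n+1}\ge 1-S_n\mid\tau_1,\dots,\tau_n]=1-F_\tau(1-S_n)$, which is strictly positive because the support of $\tau_1$ contains $[0,1+\eps]$, and it cancels the leading factor exactly. The second cancellation removes the $1/f_\tau$ factors: $(\tau_1,\dots,\tau_n)$ has joint density $\prod_{k=1}^n f_\tau(s_k)$, and $f_\tau(\tau_k)>0$ almost surely, so $\prod_k f_\tau(s_k)$ cancels $\prod_k 1/f_\tau(s_k)$. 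Summing over $n$ by Tonelli's theorem (all terms nonnegative), we are left with the bound $\E[\,\cdot\,]\le\sum_{n\ge0}C^nI_n$, where
\begin{align*}
	I_n\deq\int_{\{s_i>0,\;s_1+\cdots+s_n<1\}}\frac{1}{\big(1-\textstyle\sum_i s_i\big)^{1/2}}\prod_{k=2}^n\frac{1}{s_k^{1/2}}\,\dif s_1\cdots\dif s_n.
\end{align*}

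Finally, $I_n$ is a Dirichlet integral: introducing the slack variable $s_{n+1}\deq 1-s_1-\cdots-s_n$, the integrand is $s_1^{1-1}\prod_{k=2}^{n+1}s_k^{1/2-1}$; every exponent exceeds $-1$, so the integral is finite, and the Dirichlet formula gives $I_n=\Gamma(1)\Gamma(1/2)^n/\Gamma(1+n/2)=\pi^{n/2}/\Gamma(1+n/2)$ (one can sanity-check $I_0=1$, $I_1=2$ directly). Hence $\E[\,\cdot\,]\le\sum_{n\ge0}(C\sqrt{\pi})^n/\Gamma(1+n/2)$, which is finite for every $C>0$ since $\Gamma(1+n/2)$ grows super-exponentially in $n$; a one-line ratio test (or recognizing the entire Mittag--Leffler function $E_{1/2,1}(C\sqrt{\pi})$) finishes the argument. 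The genuinely routine parts are the Dirichlet--Gamma evaluation and the convergence of the series; the step that requires care is the first one — correctly bookkeeping which $\Delta^T_k$ equal which $\tau_k$ and which one equals the terminal slack $1-S_N$ on $\{N=n\}$, in particular that $\tau_1$ carries exponent $0$ rather than $-1/2$, so that the two cancellations align exactly and leave a \emph{convergent} Dirichlet integral.
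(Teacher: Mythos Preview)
Your proof is correct and follows essentially the same route as the paper's: decompose on $\{N=n\}$, cancel the $f_\tau$ and $1-F_\tau$ factors against the joint law of the interarrival times, and reduce to a Dirichlet integral over the simplex whose series is a Mittag--Leffler function. The only difference is that you track the indices more carefully and observe that $\tau_1$ carries exponent $0$ rather than $-1/2$, which gives the exact value $I_n=\pi^{n/2}/\Gamma(1+n/2)$ and hence $E_{1/2,1}(C\sqrt\pi)$; the paper instead inserts an extra harmless factor $(s_1-s_0)^{-1/2}\ge 1$ to symmetrize the integrand, obtaining the slightly looser bound $\sqrt\pi\,E_{1/2,1/2}(C\sqrt\pi)$.
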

\begin{proof} For each $n \ge 0$, define the $n$-simplex
	$$
	\cS^n \deq \left\{ (s_1,s_2,\ldots,s_n) \in [0,1]^n : 0 < s_1 < \ldots < s_n < 1\right\}
	$$
with $s_0 \equiv 0$ and $s_{n+1} \equiv 1$. Consider the partial sums $S_k \deq \sum^k_{i=1}\tau_i$. Since the $\tau_i$'s are i.i.d., the conditional joint density of $(S_1,S_2,\ldots,S_n)$ given $N=n$ is equal to
	\begin{align*}
		q_n(s_1,s_2,\ldots,s_n) = \frac{1}{\PP[N=n]} \cdot (1-F_\tau(1-s_n)) \cdot \prod^n_{k=1}f_\tau(s_k - s_{k-1}), \qquad (s_1,\ldots,s_n) \in \cS^n
	\end{align*}
	where we have set $s_0 \equiv 0$. Then a calculation similar to the one in  Appendix~B of \citet{andersson2017unbiased} leads to
	\begin{align*}
		&\E\left[\frac{C^N}{1-F_\tau(\Delta^T_{N+1})}\prod^N_{k=1} \frac{1}{f_\tau(\Delta^T_k)(\Delta^T_{k+1})^{1/2}}\right] \nonumber\\
		&= \sum_{n \ge 0} \PP[N=n]\cdot C^n \int_{\cS^n} \frac{1}{1-F_\tau(s_n)} \prod^n_{k=1} \frac{1}{f_\tau(s_k - s_{k-1}) (s_{k+1} - s_{k})^{1/2}} q_n(s_1,\ldots,s_n) \dif s \\
		&\le \sum_{n \ge 0} C^n\int_{\cS^n} \prod^n_{k=0} \frac{1}{(s_{k+1} - s_{k})^{1/2}} \dif s \\
		&= \sqrt{\pi} \cdot E_{1/2,1/2}(C\sqrt{\pi}),
	\end{align*}
where $\dif s$ is the Lebesgue measure on $\cS^n$ and
\begin{align}\label{eq:Mittag_Leffler}
	E_{\alpha,\beta}(z) \deq \sum^\infty_{k=0} \frac{z^k}{\Gamma(\beta + \alpha k)}, \qquad z \in {\mathbb C},\, \alpha,\beta > 0
\end{align}
is the Mittag--Leffler function \citep{erdelyi_book}. When $\alpha$ and $\beta$ are both real and positive, the series in \eqref{eq:Mittag_Leffler} converges for all values of $z \in {\mathbb C}$, which completes the proof.
\end{proof}

\section{Proof of Lemma~\ref{lm:N_mgf}}

For each $t \ge 0$, let $N_t \deq \max\{ k : S_k < t \le S_{k+1}\}$. Then $N_1 = N$ and $T_n = S_n$ for $n \le N$. Moreover, $\{N_t\}_{t \ge 0}$ is a renewal process with renewal times $\{S_k\}_{k \ge 0}$ and i.i.d.\ interrenewal times with pdf $f_\tau$. The moment-generating function of $M_t$ can be upper-bounded as follows \citep{glynn1994counting}:
\begin{align}\label{eq:Nt_mgf}
	\E[e^{\theta N_t}] \le 1 + e^\theta \sum^\infty_{k = 0} e^{\theta k} \PP[S_k < t].
\end{align}
Let $t = 1$ and fix some $\beta > 0$. Then
\begin{align*}
	\sum^\infty_{k = 0} e^{\theta k} \PP[S_k < 1] &= \sum_{k \le \beta}e^{\theta k} \PP[S_k < 1] + \sum_{k > \beta} e^{\theta k}\PP[S_k < 1] \\
	&\le (\beta+1) e^{\theta \beta} + \sum^\infty_{k=0} e^{\theta k}\PP[S_k < k\beta^{-1}]. 
\end{align*}
Using Markov's inequality and the fact that the $\tau_i$'s are i.i.d., we can further estimate
\begin{align*}
	\PP[S_k < k\beta^{-1}] &= \PP[k - \beta S_k > 0] \le e^k \E\left[e^{-\beta S_k}\right] = \Big(e M_\tau(-\beta)\Big)^k.
\end{align*}
Substituting these estimates into \eqref{eq:Nt_mgf} and optimizing over $\beta$, we get \eqref{eq:N_mgf}.

\section*{Acknowledgments}

The authors would like to thank Matus Telgarsky for many enlightening discussions. This work  was supported in part by the NSF CAREER award CCF-1254041, in part by the Center for Science of Information (CSoI), an NSF Science and Technology Center, under grant agreement CCF-0939370, in part by the Center for Advanced Electronics through Machine Learning (CAEML) I/UCRC award no.~CNS-16-24811, and in part by the Office of Naval Research under grant no.~N00014-12-1-0998.
	
	\bibliography{ref.bbl}

    \end{document}